
\documentclass[12pt]{amsart}
\usepackage{epsfig,color}

\headheight=6.15pt \textheight=8.75in \textwidth=6.5in
\oddsidemargin=0in \evensidemargin=0in \topmargin=0in



\makeatother

\setcounter{section}{-1}

\theoremstyle{definition}

\def\fnum{equation} 
\newtheorem{Thm}[\fnum]{Theorem}
\newtheorem{Cor}[\fnum]{Corollary}

\newtheorem{Lem}[\fnum]{Lemma}

\numberwithin{equation}{section}

\newcommand{\Ric}{{\text{Ric}}}

\newcommand{\Hess}{{\text {Hess}}}

\def\RR{{\bold R}}

\newcommand{\dv}{{\text {div}}}
\newcommand{\e}{{\text {e}}}

\newcommand{\cL}{{\mathcal{L}}}

\newcommand{\eqr}[1]{(\ref{#1})}

\title{Eigenvalue lower bounds and splitting for modified Ricci flow}

\author{Tobias Holck Colding}%
\address{MIT, Dept. of Math.\\
77 Massachusetts Avenue, Cambridge, MA 02139-4307.}
\author{William P. Minicozzi II}%

\thanks{The  authors
were partially supported by NSF  DMS Grants   2104349  and 2005345.}


\email{colding@math.mit.edu and minicozz@math.mit.edu}

\begin{document}

\maketitle

{\centering\footnotesize Dedicated to our friend Joel Spruck.\par}

\begin{abstract}
We  prove sharp lower bounds for eigenvalues of the drift Laplacian for a modified Ricci flow.  
The modified Ricci flow is a system of coupled equations for a metric and weighted volume that plays an important role in Ricci flow.   
We will also show that there is a splitting theorem in the case of equality.
 \end{abstract}
 

\section{Introduction}

A  metric $g$ and function $f$ on a manifold $M$ induce a  weighted $L^2$ norm
$\| u \|_{L^2}^2 = \int u^2 \, \e^{-f}$, a corresponding weighted energy, and a natural elliptic operator 
\begin{align}
	\cL = \Delta - \nabla_{\nabla f}
\end{align}
 called the 
     drift Laplacian.     When $M$ is compact, or $g$ is complete with a log Sobolev inequality and $\int \e^{-f} \, dv_g < \infty$, then
      $\cL$ has 
      eigenvalues
\begin{align}
	0 = \lambda_0 (t) < \lambda_1 (t) \leq \dots  \to \infty  
\end{align}
that carry important geometric information. 
     
   The triple
        $(M,g,f)$ is a gradient shrinking  Ricci soliton (or shrinker) if $g$ and $f$ satisfy
\begin{align}
	\Hess_f + \Ric = \frac{1}{2} \, g \, , 
\end{align}
where $\Ric$ is the Ricci  curvature  for the   metric $g$, \cite{Ca, H}.   Ricci shrinkers appear as singularities of Ricci flow.
  For shrinkers, there is a sharp lower bound
 $\lambda_1 \geq \frac{1}{2}$, where  equality is achieved if and only if the shrinker splits off a line, \cite{CxZ}.   
 In \cite{CM2}, we showed  that if a  shrinker almost splits on one scale, then 
 it also almost splits on larger scales; this was called
  propagation of almost splitting.  See \cite{ChL} for an asymptotic splitting theorem for shrinkers.
  
  We are interested here in an analogous lower bound and splitting for flows.  The parabolic analog of the shrinker equation is 
  the modified (or rescaled) Ricci flow.  In this case, the   metric $g$ and function $f$   evolve by
\begin{align}	\label{e:gf1}
	g_t &= g - 2 \, \Hess_f - 2 \, \Ric \, , \\
	f_t &= \frac{n}{2} - S - \Delta \, f \, .  \label{e:gf2}
\end{align} 
These coupled equations are important in Ricci flow since they  describe, up to scaling and diffeomorphisms, flows arising from blowing up at a  singularity; see, e.g., \cite{MM, SW}.   
They also arise  as the negative gradient flow for Perelman's entropy functional, \cite{P,T}. 
    A static solution  of \eqr{e:gf1} is a   shrinker.
    
    Easy examples show that there is no uniform lower bound for $\lambda_1$ on a modified Ricci flow, even if the flow is ancient, but instead some additional condition is necessary.  The next   theorems give generalizations  for modified Ricci flows.

\begin{Thm}		\label{c:lambda1bd}
If  $M$ is compact, $g,f$  satisfy \eqr{e:gf1} and \eqr{e:gf2} for $t \in [t_0 , \infty)$,    and there is a sequence $t_j \to \infty$ so that $(M, g(t_j), f(t_j))$ converges to a shrinker, then $\lambda_1 (t)  > \frac{1}{2}$ for all $t$.
\end{Thm}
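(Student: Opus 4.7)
The plan is to argue by contradiction: assuming $\lambda_1(t^*) = \mu \le 1/2$ for some $t^* \in [t_0,\infty)$, I will propagate a corresponding first eigenfunction forward in time by the weighted heat equation and derive a bound on $\lambda_1(t_j)$ that contradicts the shrinker spectral gap in the limit. A direct computation from \eqref{e:gf1}--\eqref{e:gf2} shows that the weighted volume form $\e^{-f(t)}\,dv_{g(t)}$ is preserved, so if $u_s = \cL_s u$ then $\int u\,\e^{-f}$ is constant in $s$ and any such evolved $u$ remains a valid test function in the Rayleigh characterization of $\lambda_1$. Accordingly, let $\phi$ be a unit-norm first eigenfunction at $t^*$ (so $\int \phi\,\e^{-f(t^*)} = 0$ and $\cL_{t^*}\phi = -\mu\phi$), let $u$ solve $u_s = \cL_s u$ with $u(t^*)=\phi$, and set $a(s) = \int u^2\,\e^{-f}$ and $b(s) = \int |\nabla u|^2\,\e^{-f}$. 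Then $a(t^*)=1$, $b(t^*)=\mu$, and $a'=-2b$.

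The heart of the argument is the evolution identity
\[
 b'(s) \;=\; -\,b(s) \;-\; 2\int |\nabla^2 u|^2\,\e^{-f}\,dv \;\le\; -\,b(s).
\]
To derive it, I differentiate through the weighted integral using that the weighted volume is preserved. The $g^{ij}$-variation, via $g_t = g - 2\,\Hess_f - 2\,\Ric$, contributes $\int\bigl[-|\nabla u|^2 + 2(\Ric+\Hess_f)(\nabla u,\nabla u)\bigr]\,\e^{-f}\,dv$; the $u$-variation contributes $-2\int (\cL u)^2\,\e^{-f}\,dv$; and the weighted Bochner identity $\int (\cL u)^2\,\e^{-f} = \int |\nabla^2 u|^2\,\e^{-f} + \int(\Ric+\Hess_f)(\nabla u,\nabla u)\,\e^{-f}$ cancels the Bakry-Emery-Ricci term and leaves the clean $-b$. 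The coupling between \eqref{e:gf1} and \eqref{e:gf2} is crucial: it is precisely the $g$ summand in $g_t$ that produces the linear $-b$ term matching the sharp shrinker constant $1/2$.

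From $b' \le -b$ and $a' = -2b$ with $a(t^*)=1$, $b(t^*)=\mu$, ODE comparison yields $b(s) \le \mu\,\e^{-(s-t^*)}$ and $a(s) \ge 1 - 2\mu\bigl(1-\e^{-(s-t^*)}\bigr)$, so
\[
 \lambda_1(s) \;\le\; \frac{b(s)}{a(s)} \;\le\; \frac{\mu\,\e^{-(s-t^*)}}{1 - 2\mu + 2\mu\,\e^{-(s-t^*)}}.
\]
Setting $s = t_j \to \infty$ and using continuity of $\lambda_1$ under smooth convergence, $\lambda_1(t_j) \to \lambda_1^\infty$, which satisfies $\lambda_1^\infty \ge 1/2$ by \cite{CxZ}. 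If $\mu < 1/2$ the right side tends to $0$, contradicting $\lambda_1^\infty \ge 1/2$. If $\mu = 1/2$ the right side equals $1/2$ for every $s$, forcing $\lambda_1^\infty = 1/2$; by \cite{CxZ} the shrinker would then split off an $\RR$-line, which is impossible since the limit shrinker is compact (as $M$ is compact). I expect the main obstacle to be the identity $b' \le -b$: the cancellation of the Bakry-Emery-Ricci term against the trace part of \eqref{e:gf1} is exactly where the specific coupling of the modified Ricci flow is used, and without it the Rayleigh estimate cannot match the sharp threshold $1/2$.
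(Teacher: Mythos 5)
Your proposal is correct and follows essentially the same approach as the paper: evolving an eigenfunction by the weighted heat flow, deriving the energy decay $b'\leq -b$ from the drift Bochner formula and the precise cancellation built into \eqref{e:gf1}--\eqref{e:gf2}, and comparing the resulting explicit upper bound on the Rayleigh quotient (which matches \eqref{e:holdsk} after clearing exponentials) with the shrinker spectral gap from \cite{CxZ}. The only organizational difference is that you work the $k=1$ case out directly with the pair $(a,b)$ rather than first establishing the general differential inequality $\lambda_k'\leq(2\lambda_k-1)\lambda_k$ of Theorem \ref{l:compk} and then specializing; this sidesteps the Gram--Schmidt bookkeeping needed for higher $k$ but contains the same calculational content.
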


Furthermore, we get   sharp upper bounds for the evolution of   $\lambda_k (t)$ for all of the eigenvalues:

\begin{Thm}	\label{l:compk}
If $M$ is compact, $g,f$  satisfy \eqr{e:gf1} and \eqr{e:gf2}, and
  $k \geq 1$, then:
\begin{itemize}
	\item If $\lambda_k (t_0) < \frac{1}{2}$, then $\lambda_k (t) < \lambda_k (t_0)$
	for $t_0 < t$ and 
	\begin{align}
	 \lambda_k (t) \leq \frac{\lambda_k(t_0)}{2\,\lambda_k(t_0)\,(1-\e^{t-t_0})+\e^{t-t_0}} \,  .\label{e:holdsk}
\end{align}
	\item If $\lambda_k (t_0) = \frac{1}{2}$, then 
	$\lambda_k (t) \leq \frac{1}{2}$ for 
	all $t_0 \leq t$.
\item If $\frac{1}{2} < \lambda_k (t_0)$, then  \eqr{e:holdsk} applies for all $t<t_0+\log \,\frac{2\,\lambda_k(t_0)}{2\,\lambda_k(t_0)-1}$.
\end{itemize}
\end{Thm}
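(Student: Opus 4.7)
The plan is to derive the pointwise differential inequality $\lambda_k'(t) \leq 2\lambda_k(t)^2 - \lambda_k(t)$ and then integrate by ODE comparison. The essential input is that the weighted measure $d\mu := \e^{-f}\,dv_g$ is preserved by the modified Ricci flow: tracing \eqr{e:gf1} gives $\partial_t \log\sqrt{\det g} = \tfrac{n}{2} - \Delta f - S$, which exactly matches $f_t$ in \eqr{e:gf2}, so $\partial_t(\e^{-f}\sqrt{\det g}) = 0$. Assume for the moment that $\lambda_k(t)$ is realized smoothly by a family of $L^2(d\mu)$-normalized eigenfunctions $u(t)$ with $\cL u = -\lambda_k u$. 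Since $\int u^2\,d\mu \equiv 1$, one has $\int u\,\partial_t u\,d\mu = 0$, which after integration by parts for $\cL$ kills the $\partial_t u$ contribution to $\partial_t \int |\nabla u|^2\,d\mu$. Combined with $\partial_t g^{ij} = -g^{ij} + 2(\Hess_f + \Ric)^{ij}$ from \eqr{e:gf1},
\begin{align*}
\lambda_k'(t) = -\lambda_k + 2\int (\Hess_f + \Ric)(\nabla u, \nabla u)\,d\mu.
\end{align*}

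The remaining step is the drift Bochner formula
\begin{align*}
\tfrac{1}{2}\cL|\nabla u|^2 = |\Hess u|^2 + \langle \nabla\cL u, \nabla u\rangle + (\Hess_f + \Ric)(\nabla u, \nabla u).
\end{align*}
Integrating against $d\mu$, using $\int\cL(\cdot)\,d\mu = 0$, integration by parts, and $\cL u = -\lambda_k u$, one obtains $\int(\Hess_f + \Ric)(\nabla u, \nabla u)\,d\mu = \lambda_k^2 - \int|\Hess u|^2\,d\mu$. Discarding the nonnegative Hessian term yields $\lambda_k'(t) \leq \lambda_k(2\lambda_k - 1)$, sharp with equality iff $\Hess u \equiv 0$ (the same mechanism that later drives the splitting). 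The ODE $\mu' = \mu(2\mu-1)$ with $\mu(t_0) = \lambda_k(t_0)$ linearizes under $\phi := 1/\mu$ to $\phi' = \phi - 2$, giving $\phi(t) - 2 = \e^{t-t_0}(\phi(t_0) - 2)$, which rearranges to exactly the expression in \eqr{e:holdsk}. The three cases correspond to the sign of $1 - 2\lambda_k(t_0)$: positive means $\phi$ strictly increases toward $2$, forcing the strict inequality in case 1; zero means $\phi \equiv 2$, so $\lambda_k(t) \leq \tfrac12$; negative means $\phi$ reaches $0$ at the time stated in case 3, at which point \eqr{e:holdsk} becomes vacuous.

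The main technical obstacle is that $\lambda_k(t)$ is in general only Lipschitz, not smooth, because of possible eigenvalue crossings, so the smooth family $u(t)$ used above may fail to exist. The standard remedy is to bound the upper right Dini derivative: fix $t_0$ and let $V_0 \subset L^2(d\mu)$ be the $(k+1)$-dimensional eigenspace at $t_0$; min-max gives $\lambda_k(t) \leq \Lambda(t) := \max_{v \in V_0,\ \|v\|_{L^2(d\mu)}=1}\int|\nabla v|^2\,d\mu(t)$ with equality at $t = t_0$. Since $d\mu$ is preserved, an $L^2(d\mu)$-orthonormal basis of $V_0$ at $t_0$ remains $L^2(d\mu)$-orthonormal for all $t$, and $\Lambda(t)$ is the top eigenvalue of a smoothly varying symmetric $(k+1)\times(k+1)$ matrix whose derivative at $t_0$ is bounded by $\lambda_k(t_0)(2\lambda_k(t_0) - 1)$ via the very Bochner calculation above, applied to the top eigenvector (which at $t_0$ is the eigenfunction $u_k$). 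Since $t_0$ is arbitrary, the differential inequality holds everywhere in the Dini sense, which is enough to run the ODE comparison and conclude \eqr{e:holdsk} in all three cases.
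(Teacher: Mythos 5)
Your proof is correct, and it reaches the same differential inequality $\lambda_k' \leq \lambda_k(2\lambda_k - 1)$ as the paper, but by a genuinely different route. The paper evolves the initial eigenfunctions $\bar u_i$ by the drift heat equation $\partial_t u_i = \cL u_i + \tfrac12 u_i$, tracks the evolution of the Gram matrices $J_{ij}, D_{ij}$, and then Gram--Schmidt re-orthonormalizes to produce test functions; you instead observe that the weighted measure $\e^{-f}dv_g$ is preserved (so a fixed orthonormal basis of the first $k+1$ eigenfunctions stays $L^2$-orthonormal automatically), differentiate the fixed Dirichlet-form matrix $A_{ij}(t)=\int\langle\nabla w_i,\nabla w_j\rangle\,d\mu$ directly, and control its top eigenvalue via min-max plus analytic perturbation. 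This avoids the re-orthonormalization machinery entirely and is arguably leaner for Theorem~\ref{l:compk} in isolation; the paper's heavier setup with evolving $u_i$ pays off later in the rigidity/splitting argument of Theorem~\ref{P:highsplit}, where those evolved functions become genuine eigenfunctions with vanishing Hessian. Your treatment of the Dini-derivative subtlety at eigenvalue crossings -- bounding $\Lambda'(t_0)$ by the maximum of $\langle A'(t_0)v,v\rangle$ over unit $v$ in the top eigenspace, each of which is still a $\lambda_k(t_0)$-eigenfunction of $\cL$ -- mirrors the paper's supremum over $\{i\leq k : \lambda_i(0)=\lambda_k(0)\}$ and is valid. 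Your ODE integration via $\phi = 1/\mu$, giving the linear equation $\phi' = \phi - 2$, is a nicer unification of the three cases than the paper's two logarithmic substitutions $\Phi=\log\tfrac{s}{1-s}$ and $\Phi=\log\tfrac{s-1}{s}$. One small slip: in case 1 you write that $\phi$ ``strictly increases toward $2$''; since $\phi(t_0)>2$ and $\phi'=\phi-2>0$, in fact $\phi$ increases \emph{away from} $2$ toward $+\infty$ (equivalently $\mu\to 0^+$), which is what actually gives the strict decay $\lambda_k(t)<\lambda_k(t_0)$.
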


\vskip1mm
The assumption $t<t_0+\log \,\frac{2\,\lambda_k(t_0)}{2\,\lambda_k(t_0)-1}$ in the last case is equivalent to assuming that the denominator in \eqr{e:holdsk}
is positive.

\vskip1mm
The previous theorems generalize to a wide class of non-compact weighted spaces,  requiring only that all integrations by parts involved make sense and  integrals converge.  We will assume this in the following theorem.

\begin{Thm}	\label{P:highsplit}
Suppose that \eqr{e:gf1} and \eqr{e:gf2} hold for $t \in [t_0 , \infty)$.
If   $\lambda_k (t_0) = \frac{1}{2}$ for some $k \geq 1$ and $\lambda_1 (t_1) \geq \frac{1}{2}$, then $g(t)$ splits off an $\RR^k$ factor  for $t \in [t_0 , t_1]$.  
\end{Thm}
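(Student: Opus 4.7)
The plan is to combine the monotonicity in Theorem \ref{l:compk} with the equality case of the drift Bochner identity.

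First, pin down the spectrum on $[t_0,t_1]$. From $\lambda_k(t_0) = \tfrac{1}{2}$ and the second bullet of Theorem \ref{l:compk}, $\lambda_k(t) \leq \tfrac{1}{2}$ for $t \geq t_0$; the contrapositive of the first bullet applied to $\lambda_1$ with $\lambda_1(t_1) \geq \tfrac{1}{2}$ forces $\lambda_1(t) \geq \tfrac{1}{2}$ for $t \leq t_1$. Since $\lambda_1(t) \leq \cdots \leq \lambda_k(t)$,
\[ \lambda_1(t) = \cdots = \lambda_k(t) = \tfrac{1}{2} \quad \text{for every } t \in [t_0, t_1], \]
so the $\tfrac{1}{2}$-eigenspace $E_t$ of $\cL_t$ has dimension at least $k$ throughout $[t_0,t_1]$.

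Second, derive a variation formula. A direct computation from \eqr{e:gf1}--\eqr{e:gf2} gives $f_t = \tfrac{1}{2}\Tr_g g_t$, so the weighted volume $e^{-f}\, dv_g$ is preserved by the flow. Choose (by analytic perturbation theory) a smoothly varying $L^2(e^{-f}\,dv_g)$-orthonormal basis $u_1(t),\dots,u_k(t)$ of $E_t$. Using $\partial_t g^{ij} = -g^{ik}g^{jl}(g_t)_{kl}$ and the integrations by parts permitted under the standing hypotheses,
\[ \frac{d}{dt}\int |\nabla u_i|^2\, e^{-f}\, dv_g \;=\; -\int g_t(\nabla u_i,\nabla u_i)\, e^{-f}\, dv_g. \]
Summing over $i$ and invoking the constancy of $\sum_i \lambda_i = k/2$ from Step~1,
\[ \sum_{i=1}^k \int g_t(\nabla u_i,\nabla u_i)\, e^{-f}\, dv_g \;=\; 0. \]

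Third, apply Bochner with drift. Rewriting \eqr{e:gf1} as $\Ric + \Hess_f = \tfrac{1}{2}(g - g_t)$, the drift Bochner--Weitzenb\"ock identity reads, for $u \in E_t$,
\[ \cL\!\left(\tfrac{1}{2}|\nabla u|^2\right) \;=\; |\Hess u|^2 + \langle \nabla \cL u, \nabla u\rangle + \Ric_f(\nabla u,\nabla u) \;=\; |\Hess u|^2 - \tfrac{1}{2}\, g_t(\nabla u,\nabla u). \]
Integrating, summing over $u = u_i$, and combining with Step~2 gives $\sum_i \int |\Hess u_i|^2\, e^{-f}\, dv_g = 0$; since each summand is nonnegative, $\Hess u_i \equiv 0$ and each $\nabla u_i$ is parallel. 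Parallelness makes $|\nabla u_i|^2$ constant on $M$, so the Bochner identity collapses to $g_t(\nabla u_i,\nabla u_i) \equiv 0$ pointwise, and polarizing over $E_t$ yields $g_t(\nabla u_i,\nabla u_j) \equiv 0$ for all $i,j$. The constant matrix $(g(\nabla u_i,\nabla u_j))_{ij}$ may be orthonormalized by a constant change of basis to $\delta_{ij}$. The $k$ mutually orthogonal parallel vector fields $\nabla u_1,\dots,\nabla u_k$ produce a (local) isometric splitting $g(t) = g_{\RR^k} \times g_{N(t)}$, and the vanishing of $g_t$ in the $\RR^k$ directions shows that this splitting persists for all $t \in [t_0, t_1]$.

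The main technical obstacle is producing the smoothly varying eigenbasis in Step 2 and justifying the time derivatives and integration by parts; this requires care in the noncompact case. The smooth basis is supplied by Kato's analytic perturbation theory for the one-parameter family $\cL_t$ with constant $k$-fold eigenvalue $\tfrac{1}{2}$, while the integrations by parts are admissible by the running hypothesis of the theorem.
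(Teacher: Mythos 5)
Your argument follows the same analytic core as the paper's — constancy of $\lambda_1 = \cdots = \lambda_k = \frac{1}{2}$ on $[t_0, t_1]$, the drift Bochner formula to convert eigenvalue stationarity into $\int |\Hess_{u_i}|^2 \, \e^{-f} = 0$, parallel $\nabla u_i$'s, and the resulting isometric product — but there are two genuine gaps exactly where the paper's construction is designed to be careful.

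First, producing a differentiable family of eigenfunctions. Your Step 2 differentiates a time-varying orthonormal eigenbasis $u_1(t), \dots, u_k(t)$ of the $\frac{1}{2}$-eigenspace $E_t$ and appeals to Kato's analytic perturbation theory. This does not come for free: Kato's analytic result needs $\cL_t$ to be an \emph{analytic} family in $t$, and the flow is only smooth; the alternative continuity-of-projections route needs the eigenvalue $\frac{1}{2}$ to be isolated from the rest of the spectrum with constant multiplicity, but you only know $\dim E_t \geq k$, so $\lambda_{k+1}(t)$ may touch $\frac{1}{2}$ and the multiplicity may jump. The paper sidesteps this entirely: it takes the time-$t_0$ eigenfunctions $\bar{u}_i$ and evolves them forward by the auxiliary heat equation $\partial_t u_i = \cL u_i + \frac{1}{2} u_i$, which manifestly has a smooth solution; it then tracks $F_i(t) = E_{u_i}/I_{u_i}$ via \eqr{e:IJ4}, pinches $F_i \equiv \frac{1}{2}$ from above (by the differential inequality and Lemma \ref{l:ftc}) and below (since $\int u_i \, \e^{-f} = 0$ makes $u_i$ an admissible test function for $\lambda_1 = \frac{1}{2}$), and reads off $\int |\Hess_{u_i}|^2 \, \e^{-f} = 0$ from the equality case of \eqr{e:IJ4}. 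No perturbation theory is needed.

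Second, and more seriously, the persistence of the splitting. You correctly deduce that for each fixed $t$ the metric $g(t)$ splits off $\RR^k$, but the theorem asserts a single product decomposition $M = N \times \RR^k$ with $g(t) = g_N(t) + \sum dx_i^2$ for \emph{all} $t \in [t_0, t_1]$; that requires the splitting directions not to rotate in time. Your sentence ``the vanishing of $g_t$ in the $\RR^k$ directions shows that this splitting persists'' is circular: $g_t(\nabla u_i(t), \nabla u_j(t)) \equiv 0$ controls the change in the metric coefficients in those directions, but says nothing about whether $\nabla u_i(t)$ itself is time-dependent, and the de Rham splitting of $\RR^k$ directions is not canonical when the flat factor could have dimension $> k$. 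In the paper this is resolved automatically: once $F_i \equiv \frac{1}{2}$, each $u_i$ achieves equality in the Rayleigh quotient, hence is an eigenfunction with $\cL u_i = -\frac{1}{2} u_i$, and the heat equation $\partial_t u_i = \cL u_i + \frac{1}{2} u_i$ then forces $\partial_t u_i \equiv 0$. The splitting is along the fixed functions $u_i$, and the product structure is literally time-independent. You would need to supply an analogous argument — for instance, showing that, after rotating your perturbation-theoretic basis, the $u_i(t)$'s can be taken constant in $t$ — before your last step is complete.
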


 \vskip1mm
 When a modified flow arises from blowing up at a singularity, then the possible long-time limits of the flow  describe the possible tangent flows at the singularity.   Thus, the uniqueness of blowups question for 
a Ricci flow is translated into the uniqueness of limits for the modified Ricci flow, cf. \cite{CM1} for mean curvature flow and \cite{Ba, CMZ1,CMZ2} for uniqueness of closed blowups for Ricci flows.

\section{Differential inequalities}

The eigenvalues $\lambda_k$ of $\cL$ vary continuously in $t$, but are not necessarily differentiable.  Differentiability can fail because of the multiplicity of the eigenvalues.  However, the variational characterization of the eigenvalues gives a natural upper bound for $\lambda_k(t)$ in the future.  This bound implies an upper bound for $\lambda_k'(t)$ in the sense of the limsup of forward difference quotients.  We collect some elementary consequences of such upper bounds.

We will say that a continuous function $h(t)$ satisfies $h'(t) \leq G(t)$ for a continuous function $G$ if it does so in the sense of the limsup of forward difference quotients:
For each $t$, we have
\begin{align}	\label{e:lsup}
	\limsup_{0 < \delta \to 0} \, \frac{h(t+\delta) - h(t)}{\delta} \leq G(t) \, .
\end{align}
There is a corresponding chain rule: If $h$ satisfies \eqr{e:lsup} on $I=(t_0 - \epsilon , t_0 + \epsilon)$ and $\Phi$ is a $C^1$ function on $h(I)$ with $\Phi' ( h(t_0))>0$, then
\begin{align}	\label{e:lsupA}
	\limsup_{0 < \delta \to 0} \, \frac{\Phi \circ h(t_0+\delta) - \Phi \circ h(t_0)}{\delta} \leq \Phi' (h(t_0)) \, G(t_0) \, .
\end{align}

\begin{Lem}	\label{l:FTCsup}
If $h$ is continuous and $h'(t) \leq c \in \RR$ in the sense of \eqr{e:lsup} for $t \in [a,b]$, then $h(t) \leq h(a) + c \, (t-a)$ for every $t \in [a,b]$.
\end{Lem}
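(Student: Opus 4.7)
The plan is a standard barrier/continuity argument, using a strict auxiliary function to trade the limsup inequality for an honest strict decrease near any putative first violation point.

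Fix $\epsilon > 0$ and define the auxiliary function
\begin{align*}
\phi_\epsilon(t) = h(t) - h(a) - c\,(t-a) - \epsilon\,(t-a) \, .
\end{align*}
Note $\phi_\epsilon$ is continuous on $[a,b]$ with $\phi_\epsilon(a)=0$, and by linearity of the forward limsup together with the hypothesis, $\phi_\epsilon'(t) \leq c - c - \epsilon = -\epsilon$ in the sense of \eqr{e:lsup}. I would then show $\phi_\epsilon(t) \leq 0$ for all $t \in [a,b]$; once this is established, letting $\epsilon \to 0$ gives the conclusion $h(t) \leq h(a) + c\,(t-a)$.

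To prove $\phi_\epsilon \leq 0$, set
\begin{align*}
t^* = \sup \{ s \in [a,b] : \phi_\epsilon \leq 0 \text{ on } [a,s] \} \, .
\end{align*}
By continuity this supremum is attained, and $\phi_\epsilon(t^*) \leq 0$. I want to rule out $t^* < b$. Suppose it is. Then by definition of limsup, choosing $\eta = \epsilon/2$ in the definition at $t^*$, there is $\delta_0 > 0$ so that for every $\delta \in (0,\delta_0)$,
\begin{align*}
\frac{\phi_\epsilon(t^* + \delta) - \phi_\epsilon(t^*)}{\delta} < -\frac{\epsilon}{2} \, .
\end{align*}
Since $\phi_\epsilon(t^*) \leq 0$, this gives $\phi_\epsilon(t^* + \delta) < -\frac{\epsilon\,\delta}{2} < 0$ for all sufficiently small $\delta > 0$, contradicting the maximality of $t^*$. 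Hence $t^* = b$ and $\phi_\epsilon \leq 0$ on all of $[a,b]$.

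The only subtle point is the very first step of the contradiction: one must remember that the limsup hypothesis gives a one-sided strict decrease past $t^*$ only in an asymptotic sense, but that is enough because the strict inequality $-\epsilon < 0$ has slack. No other obstacle arises; continuity of $h$ handles closedness of the set where $\phi_\epsilon \leq 0$, and no regularity beyond continuity is needed.
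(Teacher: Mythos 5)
Your proof is correct and takes essentially the same approach as the paper: both introduce an $\epsilon$-slack to turn the non-strict bound into a strict one with room to step forward, and then use a continuity argument (you via the supremum of the good set, the paper via showing the analogous set $I_\epsilon$ is open, closed, and nonempty) to propagate the bound to all of $[a,b]$, before letting $\epsilon \to 0$. The only cosmetic difference is that you package the inequality into the auxiliary function $\phi_\epsilon$, which is a clean rephrasing of the same argument.
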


\begin{proof}
Let $\epsilon > 0$ be arbitrary and define 
\begin{align}
	I_{\epsilon} = \{ t_0 \in [a,b] \, | \, h(t) \leq h(a) + (c+\epsilon) \, (t-a) {\text{ for all }} t \leq t_0 \} \, .
\end{align}
Since $h$ is continuous, $I_{\epsilon}$ is closed.  Note that $a \in I_{\epsilon}$.  To see that $I_{\epsilon} = [a,b]$, we will show that it is open.  Suppose therefore that $t_0 \in I_{\epsilon}$ for some $t_0 < b$.  In particular, 
\begin{align}
	h(t_0) \leq h(a) + (c+\epsilon) \, (t_0-a)
\end{align}
 and, by assumption, $h'(t_0) \leq c$ in the sense of \eqr{e:lsup}.
It follows that there exists $\delta_0 > 0$ so that
\begin{align}
	 \frac{h(t+\delta) - h(t)}{\delta} \leq (c+\epsilon) {\text{ for every }} \delta \in (0 , \delta_0] \, .
\end{align}
It follows that $t_0 + \delta \in I_{\epsilon}$ and, thus, $I_{\epsilon}$ is open and equal to all of $[a,b]$.  Since this is true for every $\epsilon > 0$, we conclude that $h(t) \leq h(a) + c \, (t-a)$ for every $t \in [a,b]$.
\end{proof}

\begin{Lem}	\label{l:ftc}
Suppose that $h(t) \geq 0$ satisfies $h' (t) \leq h(t) \, (h(t) -1)$ in the sense of \eqr{e:lsup}.
\begin{itemize}
\item If $h(t_0) < 1$, then $h(t) < h (t_0)$ for all $t > t_0$.
\item If $h(t_0) = 1$, then $h(t) \leq 1$ for all $t \geq t_0$.
\end{itemize}
\end{Lem}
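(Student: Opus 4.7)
The plan is to linearize the nonlinear differential inequality $h' \le h(h-1)$ by a monotone change of variables suggested by separating variables in the ODE $y' = y(y-1)$. Concretely, I would take $\Phi(y) = 1 - \tfrac{1}{y}$ on $y > 0$, which has $\Phi'(y) = 1/y^2 > 0$ (so the chain rule \eqr{e:lsupA} applies and preserves the inequality) and satisfies the key identity
\begin{equation}
\Phi'(y)\,\bigl[y(y-1)\bigr] \;=\; \frac{y-1}{y} \;=\; \Phi(y).
\end{equation}
Setting $u(t) = \Phi(h(t)) = 1 - 1/h(t)$ wherever $h > 0$, the chain rule yields $u'(t) \le u(t)$ in the limsup sense, hence $\bigl( u(t)\,\e^{-t}\bigr)' \le 0$. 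Applying Lemma \ref{l:FTCsup} to $u\,\e^{-t}$ then gives the linear comparison
\begin{equation}	\label{e:ucomp}
u(t) \;\le\; u(t_0)\,\e^{\,t-t_0}
\end{equation}
on any interval $[t_0,t]$ on which $h > 0$.

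For the first bullet, assume $0 < h(t_0) < 1$ (the stated $h(t_0) < 1$ is vacuous when $h(t_0) = 0$ since $h \ge 0$). Then $u(t_0) < 0$, so \eqr{e:ucomp} forces $u(t) \le u(t_0) \e^{t-t_0} < 0$ for $t \ge t_0$, which both keeps $h(t) < 1$ and keeps $h$ bounded away from $0$ on any finite interval; a standard continuation argument extends \eqr{e:ucomp} to all $t \ge t_0$. For $t > t_0$, since $u(t_0) < 0$, one has $u(t_0)\,\e^{t-t_0} < u(t_0)$ strictly, so $u(t) < u(t_0)$; because $\Phi^{-1}(s) = 1/(1-s)$ is strictly increasing, this inverts to $h(t) < h(t_0)$, as required. (In fact the same computation gives the explicit bound $h(t) \le h(t_0)/\bigl(h(t_0) + (1 - h(t_0))\e^{t-t_0}\bigr)$ matching \eqr{e:holdsk}.)

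For the second bullet, suppose toward contradiction that $h(t_1) > 1$ for some $t_1 > t_0$, and let $t_* = \sup\{\,t \in [t_0, t_1] : h(t) \le 1\,\}$. By continuity $h(t_*) = 1$ and $h > 1$ (hence $u > 0$) on $(t_*, t_1]$, so $u$ is continuous up to $t_*$ with $u(t_*) = 0$. Applying \eqr{e:ucomp} on $[t_* + \epsilon, t_1]$ and letting $\epsilon \to 0^+$ yields $u(t_1) \le u(t_*)\,\e^{t_1 - t_*} = 0$, contradicting $u(t_1) > 0$.

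The only real obstacle is ensuring the substitution $u = 1 - 1/h$ is valid throughout the interval under consideration, i.e.\ that $h$ does not pinch to $0$; this is handled by the standard open-closed continuation argument based on the a priori bound \eqr{e:ucomp}. Everything else is mechanical once the right choice of $\Phi$ is made.
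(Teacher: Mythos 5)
You take a genuinely different, and in several respects cleaner, route than the paper. The paper's proof is short and qualitative: for the first bullet it observes that $0 < h(t_0) < 1$ forces $h'(t_0) < 0$, and then uses that $h' \le h(h-1) \le 0$ whenever $h \le 1$; for the second bullet it runs a contradiction argument via Lemma \ref{l:FTCsup}. Your proof instead linearizes the Riccati-type inequality with the substitution $u = \Phi(h) = 1 - 1/h$, getting $u' \le u$ wherever $h > 0$, and then uses the integrating factor $\e^{-t}$ with Lemma \ref{l:FTCsup}. This is essentially the substitution the paper itself introduces one lemma later in Lemma \ref{l:diffineq} (there with $\Phi(s) = \log\frac{s}{1-s}$ or $\log\frac{s-1}{s}$, i.e.\ logarithms of $\pm u$), so your version unifies the two lemmas and as a bonus recovers the quantitative bound \eqr{e:case1} directly. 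More importantly: the paper's proof of the second bullet asserts ``$h \ge 1$ on $[t_1,t_2]$; in particular $h' \le 0$,'' but $h \ge 1$ makes $h(h-1) \ge 0$, so $h' \le h(h-1)$ does \emph{not} give $h' \le 0$, and that step does not hold as written. Your argument, which applies the linear comparison on $[t_*+\epsilon, t_1]$ (where $h > 1$, so the substitution is valid) and sends $\epsilon \to 0^+$ to get $u(t_1) \le u(t_*)\,\e^{t_1-t_*} = 0$, correctly closes the case and repairs that gap.

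There is one small error in your treatment of the first bullet. You claim that the linear bound $u(t) \le u(t_0)\,\e^{t-t_0}$ ``keeps $h$ bounded away from $0$ on any finite interval,'' but it does not: that bound is an \emph{upper} bound on $u$ (hence on $h$, since $\Phi$ is increasing) and says nothing to prevent $u \to -\infty$, i.e.\ $h \to 0$, which is exactly the degeneracy you need to rule out for the substitution to remain valid. So the ``standard open-closed continuation argument based on the a priori bound'' does not, as stated, do the job. The fix is easy and uses only $h \ge 0$ and the raw inequality $h' \le h(h-1)$: whenever $0 \le h \le 1$ one has $h(h-1) \le 0$, so $h' \le 0$, and Lemma \ref{l:FTCsup} then shows that if $h$ reaches $0$ it stays at $0$, giving $h(t) = 0 < h(t_0)$ thereafter; combined with your comparison on the interval before $h$ vanishes, the conclusion $h(t) < h(t_0)$ still follows for all $t > t_0$. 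You should replace your stated justification with something along these lines.
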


\begin{proof}
The first claim follows since $h(t_0) < 1$ implies that $h'(t_0) < 0$ which gives that $h(t_0 + \delta) < h(t_0) < 1$ for all $\delta > 0$ sufficiently small.  

We will prove the second claim by contradiction.  Suppose therefore that there exists $t_2 > t_0$ with $h(t_2) > 1$.  Let $t_1$ be the maximal value of
$t \in [t_0, t_2]$ with $h(t) = 1$ (this exists since $h$ is continuous and $h(t_0) = 1$).  It follows that $t_1 < t_2$, $h(t_1)=1$ and $h\geq 1$ on $[t_1,t_2]$.
In particular, $h' \leq 0$ on $[t_1,t_2]$.  Therefore, 
Lemma \ref{l:FTCsup} gives that $h\leq h(t_1)=1$ on $[t_1,t_2]$, contradicting that $h(t_2) > 1$ and, thus, completing the argument.
\end{proof}

\begin{Lem}	\label{l:diffineq}
Suppose that $F' \leq   ( 2 \, F -1) \, F$ in the sense of \eqr{e:lsup} for $t\geq t_0$.  
\begin{enumerate}
\item If $F (t_0) < \frac{1}{2}$, then  $F (t_1) < F (t_0) < \frac{1}{2}$ for all $t_1 > t_0$ and
\begin{align}
 F (t_1) \leq \frac{F(t_0)}{2\,F(t_0)\,(1-\e^{t_1-t_0})+\e^{t_1-t_0}} \,  . \label{e:case1}
\end{align}
\item If 
$\frac{1}{2}<F (t)$ for $t \in [t_0 , t_1]$ and $t_1<t_0+\log \,\frac{2\,F(t_0)}{2\,F(t_0)-1}$, then  \eqr{e:case1} holds for $t_1$.
\end{enumerate}
\end{Lem}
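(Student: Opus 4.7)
The overall strategy is to linearize the Riccati-type inequality $F' \leq F(2F-1)$ by the substitution $u = 2 - 1/F$, which converts it into the linear inequality $u' \leq u$, and then integrate explicitly using an exponential integrating factor. I tacitly assume $F > 0$ on the relevant interval: in case (2) this is automatic from $F > 1/2$, and in case (1) I would first establish both positivity and the strict monotonicity $F(t_1) < F(t_0) < \frac{1}{2}$ by applying Lemma \ref{l:ftc} to $h = 2F$, which satisfies $h' \leq 2F(2F-1) = h(h-1)$ and $h(t_0) < 1$.

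With $F > 0$ in hand, the chain rule \eqr{e:lsupA} applied with $\Phi(x) = 2 - 1/x$ (so $\Phi'(x) = 1/x^2 > 0$) yields
\begin{equation*}
u' \leq \frac{1}{F^2} \cdot F(2F-1) = 2 - \frac{1}{F} = u
\end{equation*}
in the limsup sense. Next, I multiply by the integrating factor $e^{-(t-t_0)}$: writing $v(t) = e^{-(t-t_0)} u(t)$, I split
\begin{equation*}
\frac{v(t+\delta)-v(t)}{\delta} = e^{-(t-t_0+\delta)}\,\frac{u(t+\delta)-u(t)}{\delta} + u(t)\,\frac{e^{-(t-t_0+\delta)}-e^{-(t-t_0)}}{\delta} \, .
\end{equation*}
As $\delta \to 0^+$, the limsup of the first summand is at most $e^{-(t-t_0)}\, u(t)$ (the positive continuous coefficient pulls out of the limsup), while the second summand converges to $-e^{-(t-t_0)} u(t)$, giving $v' \leq 0$ in the sense of \eqr{e:lsup}. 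Lemma \ref{l:FTCsup} then yields $v(t_1) \leq v(t_0)$, i.e.
\begin{equation*}
2 - \frac{1}{F(t_1)} \leq \left( 2 - \frac{1}{F(t_0)} \right) e^{t_1 - t_0} \, .
\end{equation*}

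Multiplying through by $F(t_0) F(t_1) > 0$ and rearranging gives
\begin{equation*}
\frac{F(t_0)}{F(t_1)} \geq 2 F(t_0)(1 - e^{t_1-t_0}) + e^{t_1-t_0} \, ,
\end{equation*}
which is exactly \eqr{e:case1} provided the right-hand side is positive. In case (1), $F(t_0) < \frac{1}{2}$ combined with $e^{t_1-t_0} > 1$ forces the right-hand side to strictly exceed $1$, so the bound holds for every $t_1 > t_0$; in case (2), the hypothesis $t_1 < t_0 + \log \frac{2 F(t_0)}{2 F(t_0)-1}$ is exactly the positivity condition for the denominator, as already noted after Theorem \ref{l:compk}. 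The only real delicacy is purely technical: justifying the chain rule and the integrating-factor step at the level of limsup forward difference quotients, which reduces to the elementary facts that $\limsup(c_n a_n) = c\,\limsup a_n$ when $c_n \to c > 0$ and $\limsup(a_n + b_n) = \limsup a_n + \lim b_n$ when $b_n$ converges. Beyond this, the argument is nothing more than the explicit integration of the logistic-type ODE $F' = F(2F-1)$.
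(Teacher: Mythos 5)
Your proof is correct, and it takes a genuinely different (though closely related) route from the paper's.

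The paper sets $h = 2F$ and applies the chain rule \eqr{e:lsupA} with two different log-substitutions depending on the sign of $h-1$: $\Phi(s) = \log\frac{s}{1-s}$ when $h<1$, giving $(\Phi(h))' \leq -1$, and $\Phi(s) = \log\frac{s-1}{s}$ when $h>1$, giving $(\Phi(h))' \leq 1$. Each yields a constant-slope linear inequality, which is immediately handled by Lemma \ref{l:FTCsup}, and the result follows by exponentiating. You instead use the single Bernoulli-type substitution $\Phi(x) = 2 - 1/x$ (equivalently $u = 2 - 1/F$), which works uniformly whether $F$ is above or below $\frac{1}{2}$ and converts the Riccati inequality to $u' \leq u$. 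The cost is that $u' \leq u$ does not have constant right-hand side, so Lemma \ref{l:FTCsup} does not apply directly and you must first pass through the integrating-factor step $v = e^{-(t-t_0)}u$ with the limsup-calculus justification (which you do carry out correctly, splitting the difference quotient and using $\limsup(c_\delta a_\delta) = c\,\limsup a_\delta$ for $c_\delta \to c>0$ together with $\limsup(a_\delta+b_\delta) = \limsup a_\delta + \lim b_\delta$). The paper avoids this by choosing $\Phi$ so that the linearized inequality has slope $\pm 1$. The trade-off is essentially a wash: you gain a unified treatment of the two cases and a more elementary $\Phi$, at the price of one extra (small) lemma-level computation that the paper builds into its choice of $\Phi$. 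Both proofs use Lemma \ref{l:ftc} identically in case (1) to get strict monotonicity, both use Lemma \ref{l:FTCsup} as the integration engine, and both implicitly need $F>0$ on the relevant interval, which you correctly flag.
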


\begin{proof}
Set $h=2\,F$ so that $h'(t) \leq h\,(h-1)$.
   It follows from the chain rule  that
\begin{enumerate}
\item[(A)]  If $F(t) < \frac{1}{2}$, then $ h(t)< 1$ and \eqr{e:lsupA} with $\Phi (s) = \log \frac{s}{1-s}$ gives that $\left(\log \frac{h}{1-h}\right)'\leq -1$.
\item[(B)]  If $\frac{1}{2}<F(t)$, then $1 < h(t)$ and  \eqr{e:lsupA} with $\Phi (s) = \log \frac{s-1}{s}$ gives that $\left(\log \frac{h-1}{h}\right)'\leq 1$
\end{enumerate}
By Lemma \ref{l:ftc}, if $h(\bar{t}) < 1$ for any $\bar{t}$, then $h (t) < h(\bar{t}) < 1$ for every $t > \bar{t}$.  In particular, if $h(t_0) < 1$, then (A) applies on the entire interval and Lemma \ref{l:FTCsup} gives that
\begin{align}
 \log \frac{h(t_1)}{1-h(t_1)} \leq -(t_1 - t_0) + \log \frac{h(t_0)}{1-h(t_0)} 
     \, .
\end{align}
 Exponentiating this and using that $h = 2 \, F$ gives the first claim.

Suppose now that $\frac{1}{2} < F(t) = \frac{1}{2} \, h(t)$ on $[t_0 , t_1]$ and $t_1 < t_0+\log \,\frac{2\,F(t_0)}{2\,F(t_0)-1}$.  It follows that (B) applies on this interval and  Lemma \ref{l:FTCsup} gives that
 \begin{align}
 	\log \frac{h(t_1)-1}{h(t_1)} \leq (t_1 - t_0) + \log \frac{h(t_0)-1}{h(t_0)}\, .
 \end{align}
 Exponentiating this and using that $h = 2 \, F$ gives the second claim (the assumption that $t_1 < t_0+\log \,\frac{2\,F(t_0)}{2\,F(t_0)-1}$ gives that 
a denominator is positive, which is used to preserve an inequality when   dividing).
\end{proof}

 \section{Eigenvalue evolution}

In this section, we will assume that $(M,g,f)$ satisfies \eqr{e:gf1} and \eqr{e:gf2}.
The $f$-divergence of a vector field $V$ is defined to be
\begin{align}	
	\dv_f \, V = \dv (V) - \langle V , \nabla f \rangle \, .
\end{align}
We will need the commutator of $\partial_t $ and $\cL$:

\begin{Lem}	\label{l:tcLu}
We have $\partial_t \, (\cL \, u ) = \cL \, u_t - 2\, \dv_f \, (\phi (\nabla u))$.
\end{Lem}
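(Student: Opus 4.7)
The plan is to compute the commutator directly from the definitions, using the standard formula for how the Laplacian varies under a time-dependent metric. Let me write $\phi$ for the shrinker tensor $\Hess_f + \Ric - \tfrac{1}{2} g$, which by \eqr{e:gf1} satisfies $g_t = -2\phi$; a static solution of the modified flow is precisely a shrinker, i.e.\ has $\phi \equiv 0$, so one expects all correction terms in the commutator to be linear in $\phi$.

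First I would recall that if $\partial_t g = h$, then the Christoffel symbols evolve by
\begin{align}
\partial_t \Gamma^k_{ij} = \tfrac{1}{2} g^{kl} \bigl( \nabla_i h_{jl} + \nabla_j h_{il} - \nabla_l h_{ij} \bigr) \, ,
\end{align}
and hence
\begin{align}
\partial_t (\Delta u) = \Delta u_t - \langle h , \Hess u \rangle - \bigl\langle \dv h - \tfrac{1}{2}\, d(\Tr_g h) , \nabla u \bigr\rangle \, .
\end{align}
Similarly, the drift term satisfies $\partial_t \langle \nabla f , \nabla u\rangle = -h(\nabla f,\nabla u) + \langle \nabla f_t , \nabla u\rangle + \langle \nabla f , \nabla u_t\rangle$. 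Subtracting gives a formula for $\partial_t (\cL u) - \cL u_t$ in terms of $h$, $\Tr_g h$, $\dv h$, and $\nabla f_t$.

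Next I would substitute the specific $h = g - 2\Hess_f - 2\Ric$ and $f_t = \tfrac{n}{2} - S - \Delta f$ from \eqr{e:gf1}--\eqr{e:gf2}. The two identities I need here are the contracted second Bianchi $\dv \Ric = \tfrac{1}{2}\, dS$, and the commutator of covariant derivatives $(\dv \Hess f)_j = \nabla_j \Delta f + \Ric_{jk}\nabla^k f$. Together these give clean expressions for $\Tr_g h$, $\dv h$ and $\nabla f_t$; after collecting terms, the purely Ricci and purely $\Hess_f$ contributions should combine into an expression involving only $\phi$ and $\dv \phi$.

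Finally I would compare with the right-hand side: unpacking
\begin{align}
-2 \, \dv_f \bigl( \phi(\nabla u) \bigr) = -2 \langle \dv \phi , \nabla u\rangle - 2\langle \phi , \Hess u\rangle + 2\, \phi(\nabla f , \nabla u) \, ,
\end{align}
and using the same two identities to rewrite $\dv \phi$, one should see that this matches the expression obtained above term-by-term. The main obstacle is not conceptual but bookkeeping: tracking signs and the cancellations between $\Hess_f$- and $\Ric$-derived terms, and in particular confirming that the $\nabla S$ and $\nabla \Delta f$ contributions from $\nabla f_t$ combine correctly with those from $\dv h - \tfrac{1}{2} d\Tr_g h$ and $\dv \phi$. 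Once the algebra is organized around the tensor $\phi$, the identity should drop out.
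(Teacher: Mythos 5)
Your plan is correct and takes a genuinely different route from the paper's. You differentiate the covariant form of $\Delta$ through the variation of the Christoffel symbols, so the commutator appears as a sum of tensor contractions, $-\langle h,\Hess u\rangle-\langle \dv h-\tfrac12 d\,\Tr_g h,\nabla u\rangle$, which then has to be repackaged into divergence form to match $-2\dv_f(\phi(\nabla u))$; that is why you invoke contracted Bianchi and the commutation identity for $\dv\Hess_f$. The paper instead differentiates the local-coordinate divergence form $\Delta u=\chi^{-1}\partial_i(\chi\, g^{ij}u_j)$ with $\chi=\sqrt{\det g}$, and the single input $\chi'=f_t\,\chi$ --- which is precisely $\Tr_g g_t=2f_t$, the trace of \eqr{e:gf1} matched against \eqr{e:gf2} --- makes the answer come out already as $\langle\nabla f_t,\nabla u\rangle-\dv(g_t(\nabla u))$, so adding the drift-term variation gives $-\dv_f(g_t(\nabla u))$ directly with no curvature identity at all. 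A shortcut worth seeing in your route: since $g_t$ is exactly $2\phi$, the $\langle\dv\phi,\nabla u\rangle$, $\langle\phi,\Hess u\rangle$, and $\phi(\nabla f,\nabla u)$ contributions on the two sides cancel identically before you ever unpack $\dv\Ric$ or $\dv\Hess_f$; the only nontrivial use of the flow equations is $\Tr_g\phi=f_t$, so Bianchi and the Hessian commutator are more work than the lemma requires. One sign to fix before the bookkeeping: the paper's $\phi$ is $\tfrac12 g-\Hess_f-\Ric$ with $g_t=+2\phi$, the negative of the $\phi$ you introduce, so as written your conventions would produce $+2\dv_f(\phi(\nabla u))$ rather than the stated $-2\dv_f(\phi(\nabla u))$.
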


\begin{proof}
Since $g_{ij} g^{jk} = \delta_{ik}$, we have that $(g^{-1})' = - g^{-1} \, g_t \, g^{-1}$.  Therefore, 
since $\langle \nabla u , \nabla f \rangle = g^{ij} \, u_i \, f_j$, we have
\begin{align}	\label{e:tL1}
	\partial_t \, \langle \nabla u , \nabla f \rangle = - g_t (\nabla u , \nabla f)  + \langle \nabla u_t , \nabla f \rangle + \langle \nabla u , \nabla f_t \rangle \, .
\end{align}
Next, recall that $\Delta \, u = (\det \, g)^{- \frac{1}{2}} \, \partial_i \, \left( (\det \, g)^{\frac{1}{2}} g^{ij} \, u_j \right)$.   
To shorten notation below, set $\chi = \sqrt{ \det g}$.
Using that
\begin{align}
	 (\det \, g)' = (\det \, g) \, g^{ij} \, (g_t)_{ij} = 2\, (\det \, g) \, f_t \, ,
\end{align}
we see that $\chi' =  \chi \, f_t$ and $\left[ \chi^{-1} \right]' = - \chi^{-1} \, f_t$.
Using this gives
\begin{align}
	\partial_t \, \Delta \, u - \Delta \, u_t &=
	-  \chi^{-1}  \, f_t \, \partial_i \, \left( \chi \, g^{ij} \, u_j \right)
	+ \chi^{-1}  \, \partial_i \, \left( \chi \, f_t\, g^{ij} \, u_j \right) 
	- \chi^{-1} \, \partial_i \, \left( \chi \, g^{ip}\, (g_t)_{pq} \, g^{qj} \, u_j \right)  \notag  \\
	&=  \partial_i (f_t) \, g^{ij} \, u_j -  \chi^{-1} \, \partial_i \, \left( \chi \, g^{ip}\, (g_t)_{pq} \, g^{qj} \, u_j \right)  \\
	&= \langle \nabla f_t , \nabla u \rangle -  \dv \, ( g_t (\nabla u)) 	\notag \, ,
\end{align}
where $g_t (\nabla u)$ denotes the vector field dual to the one form $g_t (\nabla u, \cdot)$.  Combining this with \eqr{e:tL1} gives
\begin{align}
	\partial_t \, (\cL \, u)  &= \partial_t \, (\Delta \, u) - \partial_t \, \left( \langle \nabla u , \nabla f \rangle \right) \notag \\
	&= \Delta \, u_t 
	+ \langle \nabla f_t , \nabla u \rangle -  \dv \, ( g_t (\nabla u)) 
	  + g_t (\nabla u , \nabla f)  - \langle \nabla u_t , \nabla f \rangle - \langle \nabla u , \nabla f_t \rangle \\
	&= \cL \, u_t - \dv_f \, (g_t (\nabla u))
	\notag \, .
\end{align}
\end{proof}

In the remainder of this section, we will assume that $M$ is compact.  The arguments generalize to the non-compact case under suitable hypotheses to guarantee 
that various integrals converge and boundary terms vanish asymptotically.

\begin{Lem}	\label{l:dB}
If $u \in W^{1,2}$ and $\phi = \frac{1}{2} \, g - \Hess_f - \Ric$, then
\begin{align}
	\int \phi (\nabla u , \nabla u) \, \e^{-f} &= \int \left( |\Hess_u|^2 + \frac{1}{2} \, |\nabla u|^2 - (\cL \, u)^2 \right) \, \e^{-f} 		\, .
\end{align}
\end{Lem}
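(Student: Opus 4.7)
My plan is to derive the identity from the weighted Bochner formula. For the drift Laplacian $\cL=\Delta-\nabla_{\nabla f}$ acting on a smooth function $u$ one has the pointwise identity
\begin{align}
\tfrac12\,\cL\,|\nabla u|^2 = |\Hess_u|^2 + \langle \nabla u,\nabla \cL u\rangle + (\Ric+\Hess_f)(\nabla u,\nabla u),
\end{align}
which is the standard weighted Bochner formula (Bakry--\'Emery). I would integrate both sides against $\e^{-f}$ and exploit the two basic integration-by-parts identities for the weighted measure: first, $\e^{-f}\,\cL h = \dv(\e^{-f}\nabla h)$ for any smooth $h$, so on the compact manifold $M$ we have $\int \cL h \, \e^{-f}=0$, killing the left-hand side when applied to $h=|\nabla u|^2$; second, $\cL$ is self-adjoint in $L^2(\e^{-f})$, so
\begin{align}
\int \langle \nabla u,\nabla \cL u\rangle \, \e^{-f} = -\int (\cL u)^2 \, \e^{-f}.
\end{align}

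Putting these together, integrating the weighted Bochner formula gives
\begin{align}
0 = \int |\Hess_u|^2\,\e^{-f} - \int (\cL u)^2\,\e^{-f} + \int (\Ric+\Hess_f)(\nabla u,\nabla u)\,\e^{-f}.
\end{align}
Finally, from the definition $\phi = \tfrac12\,g - \Hess_f - \Ric$ we have $\Ric+\Hess_f = \tfrac12\,g - \phi$, so $(\Ric+\Hess_f)(\nabla u,\nabla u) = \tfrac12\,|\nabla u|^2 - \phi(\nabla u,\nabla u)$. Substituting and rearranging yields exactly the claimed identity.

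Since $M$ is compact, no boundary terms appear and all integrals converge. The only minor regularity point is that the stated hypothesis is merely $u\in W^{1,2}$, whereas $|\Hess_u|^2$ and $(\cL u)^2$ appear on the right; I would handle this by a standard density argument, proving the identity first for smooth $u$ (where the Bochner computation is classical) and then extending to the natural domain where both sides are finite. I do not anticipate a genuine obstacle here — the content of the lemma is the Bakry--\'Emery Bochner identity repackaged using the modified Ricci flow tensor $\phi$, and the proof is essentially one integration by parts once the pointwise Bochner formula is in hand.
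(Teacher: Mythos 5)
Your proposal is correct and is essentially identical to the paper's proof: both use the drift (Bakry--\'Emery) Bochner formula, observe that $\int \cL\,|\nabla u|^2\,\e^{-f}=0$, integrate $\langle\nabla u,\nabla\cL u\rangle$ by parts to produce $-\int(\cL u)^2\e^{-f}$, and substitute $\Ric+\Hess_f=\tfrac12 g-\phi$. The only cosmetic difference is that the paper makes the $\phi$ substitution at the pointwise stage before integrating, whereas you do it after; this is immaterial.
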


\begin{proof}
The drift Bochner formula gives
\begin{align}
	\frac{1}{2} \, \cL \, |\nabla u|^2  & =  |\Hess_u|^2 + \langle   \nabla \, \cL \, u , \nabla u \rangle + \Ric (\nabla u , \nabla u) + \Hess_f (\nabla u , \nabla u) \notag \\
	&=  |\Hess_u|^2 + \langle   \nabla \, \cL \, u , \nabla u \rangle - \phi (\nabla u , \nabla u) + \frac{1}{2} \, |\nabla u|^2 \, .
\end{align}
Using that $\cL \, |\nabla u|^2$ integrates to zero against $\e^{-f}$ and then integrating by parts gives the first equality in the lemma.
\end{proof}

\begin{Lem}	\label{l:evolves}
If   $u_t = \cL \, u + \frac{1}{2}  \, u$ and $v_t = \cL \, v +  \frac{1}{2} \, v$, then
\begin{align}
	\partial_t \, \int u\, v \, \e^{-f} &=    \int \left(  u \, v - 2 \, \langle \nabla u , \nabla v \rangle \right)  \, \e^{-f} \, , \\
	\partial_t \, \int |\nabla u|^2 \, \e^{-f} &= - 2\, \int     |\Hess_u|^2   \, \e^{-f} \, .
\end{align}
In particular, if $\int u \, \e^{-f} = 0$ at some time, then it vanishes at all future times.
\end{Lem}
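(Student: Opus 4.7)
The plan is to first observe that the weighted volume form $\e^{-f} \, dv_g$ is preserved in time. Indeed, the computation in the proof of Lemma \ref{l:tcLu} already gave $\partial_t \, \sqrt{\det g} = f_t \, \sqrt{\det g}$, which exactly cancels $\partial_t \, \e^{-f} = -f_t \, \e^{-f}$. Consequently $\partial_t$ commutes with $\int \cdot \, \e^{-f} \, dv$, so every time derivative below reduces to integrating a pointwise time derivative.

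For the first identity, I would apply the product rule, substitute the evolution equations for $u$ and $v$, and use that $\cL$ is self-adjoint with respect to $\e^{-f} \, dv$:
\begin{align}
\partial_t \, \int u \, v \, \e^{-f} = \int \left( v \, \cL \, u + u \, \cL \, v + u \, v \right) \, \e^{-f} = \int \left( u \, v - 2 \, \langle \nabla u , \nabla v \rangle \right) \, \e^{-f} \, .
\end{align}
The second equality amounts to two integrations by parts.

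For the second identity, I would first expand pointwise:
\begin{align}
\partial_t \, |\nabla u|^2 = (\partial_t \, g^{ij}) \, u_i \, u_j + 2 \, \langle \nabla u_t , \nabla u \rangle = - 2 \, \phi (\nabla u , \nabla u) + 2 \, \langle \nabla u_t , \nabla u \rangle \, ,
\end{align}
using $(g^{ij})' = - g^{ik} \, (g_t)_{kl} \, g^{lj}$ together with $g_t = 2 \, \phi$ from \eqr{e:gf1}. Integrating by parts the last term with $u_t = \cL \, u + \frac{1}{2} u$ yields
\begin{align}
\int \langle \nabla u_t , \nabla u \rangle \, \e^{-f} = - \int u_t \, \cL \, u \, \e^{-f} = - \int (\cL \, u)^2 \, \e^{-f} + \frac{1}{2} \int |\nabla u|^2 \, \e^{-f} \, .
\end{align}
Substituting the identity from Lemma \ref{l:dB} for $\int \phi (\nabla u , \nabla u) \, \e^{-f}$ then produces a sum in which both $\int (\cL \, u)^2 \, \e^{-f}$ and $\int |\nabla u|^2 \, \e^{-f}$ cancel, leaving exactly $- 2 \int |\Hess_u|^2 \, \e^{-f}$.

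For the final statement, I would directly compute
\begin{align}
\partial_t \, \int u \, \e^{-f} = \int u_t \, \e^{-f} = \int \cL \, u \, \e^{-f} + \frac{1}{2} \int u \, \e^{-f} = \frac{1}{2} \int u \, \e^{-f} \, ,
\end{align}
using $\int \cL \, u \, \e^{-f} = 0$. This linear ODE shows that the weighted mean of $u$ evolves purely by scaling, hence vanishes at every future time once it vanishes at one. The only real subtlety is the cancellation in the second identity, which hinges on both the drift Bochner formula of Lemma \ref{l:dB} and the precise coefficient $\frac{1}{2}$ in the evolution equations for $u$ and $v$; once the weighted measure is observed to be invariant, everything else is product rule and integration by parts.
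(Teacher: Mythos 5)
Your proposal is correct and follows essentially the same route as the paper: both observe that $\e^{-f}\,dv$ is preserved in time, reduce to integrating pointwise time derivatives, use $\partial_t g^{ij} = -2\,\phi^{ij}$ together with Lemma \ref{l:dB} for the energy identity, and close the argument with the same ODE for $\int u\,\e^{-f}$. The only cosmetic difference is that for the first identity the paper rewrites $v\,\cL u + u\,\cL v$ as $\cL(uv) - 2\langle\nabla u,\nabla v\rangle$ before integrating, while you integrate the two terms by parts directly.
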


\begin{proof}
The evolution preserves the volume element $\e^{-f} \, dv$, so we get for any function $w$ that
\begin{align}	\label{e:fvolp}
	\partial_t \, \int w \, \e^{-f} = \int  w_t \, \e^{-f} \, .
\end{align}
 Applying this with $w=u\, v$ where
\begin{align}
	(uv)_t = u\, v_t + v \, u_t = v \, ( \cL \, u +  \frac{1}{2} \, u) + u \, ( \cL \, v +  \frac{1}{2} \, v) = u \, v + \cL \, (u\, v) - 2 \, \langle \nabla u , \nabla v \rangle \, . \notag
\end{align}
Integrating this gives the first claim.
Using that 
\begin{align}
	|\nabla u|^2 = g(\nabla u , \nabla u) = g^{-1} (du , du) \, , 
\end{align}
we get
\begin{align}	\label{e:e216}
	\partial_t \, \int |\nabla u|^2 \, \e^{-f} &= \int \left( 2 \, \langle \nabla u , \nabla u_t \rangle - g_t (\nabla u , \nabla u) \right) \, \e^{-f} \notag \\
	&= - 2\, \int \left(   u_t \, \cL \, u + \phi  (\nabla u , \nabla u) \right) \, \e^{-f} \, ,
\end{align}
where the last equality used integration by parts and 
  $g_t = 2\, \phi$.  Applying Lemma \ref{l:dB}    gives
\begin{align}
	\partial_t \, \int |\nabla u|^2 \, \e^{-f} &= - 2\, \int \left( u_t \, \cL \, u + |\Hess_u|^2 + \frac{1}{2} \, |\nabla u|^2 - (\cL \, u)^2 \right) \, \e^{-f} \, .
\end{align}
The second claim follows from this and  $u_t = \cL \, u +  \frac{1}{2} \, u$.  Finally, applying \eqr{e:fvolp} with $w=u$ gives
\begin{align}
	\partial_t \, \int u \, \e^{-f} = \int  u_t \, \e^{-f} = \int (\cL \, u + \frac{1}{2} \, u ) \, \e^{-f} = \frac{1}{2} \, \int u \, \e^{-f} \, .
\end{align}
Integrating this, we see that if $\int u \, \e^{-f}$ vanishes at some time, then it also vanishes at all later times.
\end{proof}

 \subsection{Evolution of energy and inner products}

Given $u, v$ with $u_t = \cL \, u + \frac{1}{2}  \, u$ and $v_t = \cL \, v +  \frac{1}{2} \, v$, we will use the following quantities (motivated  by section $3$ in \cite{CM3}) repeatedly:
\begin{align}
	J_{uv} (t) &= \int u \, v \, \e^{-f} \, , \label{e:hereJ}  \\
	D_{uv} (t) &= \int \langle \nabla u , \nabla v \rangle \, \e^{-f} \, . \label{e:hereD}
\end{align}
We also define $I_u (t) = J_{uu} (t)$,  $E_{u} (t) = D_{uu} (t)$, and $F_u (t) = \frac{E_u (t)}{I_u(t)}$.   Using this notation, Lemma \ref{l:evolves} gives 
\begin{align}
	J_{uv}'(t) &=  J_{uv} (t) - 2 \, D_{uv}(t) \, ,  \label{e:IJ1} \\
	I_{u}'(t) &=  I_u(t) - 2\, E_u (t) \, , \label{e:IJ2} \\
	(\log I_u)'(t)&=1-2\,F_u(t)\,  , \label{e:log}\\
	E_u'(t) &= -2 \, \int  \left| \Hess_{u} \right|^2 \, \e^{-f} \leq 0 \, , \label{e:IJ3} \\
	F_u'(t) &= - 2\, \frac{ \int  \left| \Hess_{u} \right|^2 \, \e^{-f}}{I_u(t)} + F_{u} (t) \, \left( 2 \, F_{u}(t) - 1 \right) 
	\, . \label{e:IJ4}
\end{align}

\begin{proof}[Proof of Theorem \ref{l:compk}]
Translate in time so that $t_0 = 0$ and let $\bar{u}_1 , \dots , \bar{u}_k$ be $L^2$ orthonormal with at time $0$ with $\cL \, \bar{u}_i = - \lambda_i (0) \, \bar{u}_i$.  Integrating
 \begin{align}
 	\cL \, (\bar{u}_i \, \bar{u}_j) = (\lambda_i (0) + \lambda_j (0)) \, \bar{u}_i \, \bar{u}_j +
2 \, \langle \nabla \bar{u}_i , \nabla \bar{u}_j \rangle \, , 
\end{align}
 we see that 
\begin{align}	\label{e:baruizero}
	2\, \int   \langle \nabla \bar{u}_i , \nabla \bar{u}_j \rangle\, \e^{-f} &= - (\lambda_i (0) + \lambda_j (0)) \, \int \bar{u}_i \, \bar{u}_j  \, \e^{-f} = 0 \, .
\end{align}
Let $u_i (x,t)$ be the solutions of
$\partial_t \, u_i = \cL \, u_i + \frac{1}{2} \, u_i$ with $u_i (x,0) = \bar{u}_i (x)$.  Since $\int \bar{u}_i \, \e^{-f} = 0$,  the first claim in Lemma \ref{l:evolves} 
(with $u=u_i$ and $v=1$) gives that
\begin{align}
	\int u_i \, \e^{-f} = 0 {\text{ for all }} t \geq 0 \, .
\end{align}
Define 
functions $I_i(t), J_{ij} (t), E_i(t), D_{ij} (t)$ as in \eqr{e:hereJ}, \eqr{e:hereD}.  We have at $t=0$ that
\begin{align}
	I_i (0) = 1 , \, J_{ij} (0) = \delta_{ij} , \, E_i (0) =  \lambda_i (0)  {\text{ and }}  D_{ij} (0) = 
	\lambda_i (0) \, \delta_{ij} \, .
\end{align}
  Lemma \ref{l:evolves}  (see \eqr{e:IJ2}, \eqr{e:IJ2} and \eqr{e:IJ3}) gives
\begin{align} \label{e:Jij0} 
	I_i'(0)   = 1 - 2 \, \lambda_i \, ,  
	J_{ij}'(0)   = (1 - 2 \, \lambda_i ) \, \delta_{ij}  {\text{ and }}
	E_i'(0)  \leq 0 \, .
\end{align}
The $u_i$'s are orthonormal at $t=0$ and vary continuously, so the Gram-Schmidt process constructs a matrix $a_{ij} (t)$ so that
\begin{enumerate}
\item For each $t$, the functions $v_i (x,t) = \sum_{j} a_{ij}(t) \, u_i (x,t)$ are orthonormal.
\item $a_{ij} (0) = \delta_{ij}$.
\item  If $j > i$, then $a_{ij} (t) = 0$.
\end{enumerate}
Property ($1$) gives for all $t$ that 
\begin{align}
	\sum_{k,m} \, a_{ik}(t) \, J_{km}(t) \, a_{jm} (t)= \delta_{ij} \, .
\end{align}
  Differentiating this in $t$, then using \eqr{e:Jij0},  $a_{ij} (0) = \delta_{ij}$ by ($2$), and $J_{ij} (0) = \delta_{ij}$ gives 
\begin{align}
	0 &= \sum_{p,m} \, \left( a_{ip}'(0) \, \delta_{pm} \, \delta_{jm} + \delta_{ip} \, J_{pm}'(0) \, \delta_{jm} + \delta_{ip} \, \delta_{pm} \, a_{jm}'(0) \right) \notag 
	\\
	& = a_{ij}'(0) + (1 - 2 \, \lambda_i ) \, \delta_{ij} + a_{ji}'(0) \, .
\end{align}
It follows that $a_{ii}'(0) = \frac{1}{2} \,  ( 2 \, \lambda_i -1)$ and, using also ($3$) that $a_{ij}'(0)=0$ for $i \ne j$.  Observe that
\begin{align}
	 \int |\nabla v_i (\cdot , t)|^2 \, \e^{-f} = \sum_{j,k} a_{ij} (t) \, a_{ik}(t) \, D_{jk}(t) \, .
\end{align}
Differentiating this at $t=0$ and using that $a_{ij} (0) = \delta_{ij}$ and $D_{ij} (0) = \lambda_i (0) \, \delta_{ij}$ gives
\begin{align}
	\frac{d}{dt} |_{t=0} \,  \int |\nabla v_i (\cdot , t)|^2 \, \e^{-f} &= \sum_{j,k} 
	\left( a_{ij}' (0) \, \delta_{ik} \,  \lambda_j (0) \, \delta_{jk}(0) + \delta_{ij}  \, a_{ik}'(0) \, \lambda_j (0)  \delta_{jk} + \delta_{ij} \, \delta_{ik}\, D_{jk}'(0) \right) \notag \\
	&= 2\, a_{ii}'(0) \, \lambda_i (0) + F_{ii}'(0) = ( 2 \, \lambda_i (0) -1) \, \lambda_i (0) - 2 \, \int \left| \Hess_{\bar{u}_i} \right|^2 \, \e^{-f} \, .
\end{align}
Since the functions $v_i$ are orthonormal (and $\int v_i \, \e^{-f}=0$), so they give test functions for $\lambda_k$.  Therefore, in the sense of 
 \eqr{e:lsup}, we get that
\begin{align}
	\lambda_k' (0) &\leq \sup_{\{ i \leq k \, | \, \lambda_i (0) = \lambda_k(0) \} }\, \, \partial_t |_{t=0} \, \int |\nabla v_i|^2 \, \e^{-f} 
	\leq    ( 2 \, \lambda_k(0) -1) \, \lambda_k (0) \, .
\end{align}
We proved this at $0$, but the same arguments applies at each $t$ to give that
\begin{align}
	\lambda_k' (t) \leq  ( 2 \, \lambda_k(t) -1) \, \lambda_k (t)
\end{align}
in the sense of \eqr{e:lsup}.  Therefore, the first claim in Lemma \ref{l:diffineq} gives the first claim in the theorem.   The second claim in the theorem follows from Lemma \ref{l:ftc}.  

Finally, suppose that $\lambda_k (t_0) > \frac{1}{2}$ and  $t<t_0+\log \,\frac{2\,\lambda_k(t_0)}{2\,\lambda_k(t_0)-1}$.  We must show that
\begin{align}
	 \lambda_k (t) \leq \frac{\lambda_k(t_0)}{2\,\lambda_k(t_0)\,(1-\e^{t-t_0})+\e^{t-t_0}} \, .
\end{align}
Since the right-hand side is greater than $\frac{1}{2}$, this follows immediately from the second case if there is any $s \in [t_0 , t_1]$ with 
$\lambda_k (s) \leq \frac{1}{2}$.  Therefore, we can assume that $\lambda_k (s) > \frac{1}{2}$ for all $s \in [t_0 , t_1]$.  The last claim in the theorem now follows from the second claim in Lemma \ref{l:diffineq}.
\end{proof}

\begin{proof}[Proof of Theorem \ref{c:lambda1bd}]
Let $(M, \bar{g} , \bar{f})$ be the limiting gradient shrinking Ricci soliton and $\bar{\cL}$ its drift Laplacian.
By   \cite{CxZ} (cf. \cite{HN}), $\bar{\cL}$ has discrete spectrum   
\begin{align}
	\bar{\lambda}_0 = 0<   \frac{1}{2} < \bar{\lambda}_1  \leq \dots \to \infty \, , 
\end{align}
 the eigenfunctions   are in $W^{1,2}$, and (cf. \cite{CM2}) 
 \begin{align}
 	\lambda_1 (t_j) \to \bar{\lambda}_1 \, .
 \end{align}
 The theorem now follows immediately from the first claim in Theorem \ref{l:compk}.
\end{proof}

    \subsection{Sharpness}
The next example shows that the estimates in Theorem \ref{l:compk}
 are sharp.
Define the function $f$ and metric $g$ on $\RR^n$ by
\begin{align}	\label{e:fgont}
	f(x,t) = \frac{|x|^2}{4}- \frac{n}{2} \, \log u(t) {\text{ and }} g(t) = u(t) \, \delta_{ij} \, , 
\end{align}
where the function $u(t)$ is given by
\begin{align}	\label{e:trivsol}
	u(t) = 1 + \left(u(t_0) - 1 \right) \, \e^{t-t_0} \, .
\end{align}
It is easy to see that  $f$ and $g$ satisfy the modified Ricci flow equations \eqr{e:gf1} and \eqr{e:gf2}.  
Notice that if $u(t_0) > 1$, then $u$ is growing and is defined for all $t$ (i.e., it is eternal).  On the other hand, 
when $u(t_0) < 1$, then $u$ shrinks to zero in finite time - but the solution is ancient.

\begin{Lem}	\label{l:lam1}
If $g = a \, \delta_{ij}$ and $f = \frac{|x|^2}{4}$ on $\RR^n$, then $\lambda_1 =\frac{a^{-1}}{2}$.
\end{Lem}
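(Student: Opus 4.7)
The plan is to reduce the statement to the standard Gaussian drift Laplacian via a scaling argument. Since $g = a\,\delta_{ij}$ is flat with inverse $g^{ij} = a^{-1}\,\delta^{ij}$ and vanishing Christoffel symbols, for any smooth $u$
\begin{align}
	\cL u \;=\; g^{ij}\,u_{ij} - g^{ij}\, f_i\, u_j \;=\; a^{-1}\Bigl(\sum_i u_{ii} - \tfrac{1}{2}\, x\cdot \nabla u\Bigr) \;=\; a^{-1}\, \cL_0\, u,
\end{align}
where $\cL_0 = \Delta_0 - \tfrac{1}{2}\, x\cdot \nabla_0$ is the Ornstein--Uhlenbeck operator on standard Euclidean $\RR^n$ with weight $\e^{-|x|^2/4}$. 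The Riemannian volume element is $dv_g = a^{n/2}\, dx$, so this overall constant cancels between numerator and denominator of the Rayleigh quotient; consequently the spectrum of $\cL$ is precisely $a^{-1}$ times the spectrum of $\cL_0$, and it is enough to prove $\lambda_1(\cL_0) = \tfrac{1}{2}$.

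For the upper bound, the coordinate functions $x_1,\dots,x_n$ satisfy $\cL_0\, x_i = -\tfrac{1}{2}\, x_i$ and $\int x_i\,\e^{-|x|^2/4}\,dx = 0$, so they are admissible test functions and give $\lambda_1(\cL_0) \leq \tfrac{1}{2}$. For the matching lower bound I would use the drift Bochner identity of Lemma \ref{l:dB}: on $(\RR^n,\delta,|x|^2/4)$ the tensor $\phi = \tfrac{1}{2}g - \Hess_f - \Ric$ vanishes identically, since $\Ric=0$ and $\Hess_f=\tfrac{1}{2}g$. Therefore if $\cL_0 u = -\lambda u$ with $u \not\equiv 0$ and $\lambda > 0$, integrating by parts against $\e^{-f}$ yields $\int |\nabla u|^2\,\e^{-f}\,dx = \lambda \int u^2\,\e^{-f}\,dx$, and Lemma \ref{l:dB} then gives
\begin{align}
	0 \;=\; \int\! |\Hess_u|^2\,\e^{-f}\,dx \;-\; \lambda\!\left(\lambda - \tfrac{1}{2}\right)\!\int\! u^2\,\e^{-f}\,dx,
\end{align}
which forces $\lambda \geq \tfrac{1}{2}$. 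Combining the two bounds and the scaling above gives $\lambda_1 = \tfrac{a^{-1}}{2}$.

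The one point that deserves care is that $\RR^n$ is non-compact, so one must justify that Lemma \ref{l:dB} and the integration by parts used to pass from $\int u\,\cL_0 u\,\e^{-f}$ to $-\int|\nabla u|^2\,\e^{-f}$ apply in this unbounded setting. This is the standard situation for the Gaussian: eigenfunctions of $\cL_0$ are Hermite polynomials (so lie in $W^{1,2}$ with all derivatives having polynomial growth), and the rapid Gaussian decay of $\e^{-|x|^2/4}$ makes all boundary terms at infinity vanish. The excerpt explicitly allows exactly this assumption just above Theorem \ref{P:highsplit}, so no further work is required beyond citing it.
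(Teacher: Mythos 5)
Your proof is correct, but it takes a genuinely different route from the paper's. The paper uses a ladder (Hermite) argument: it computes $\langle \nabla v, \nabla f\rangle = a^{-1}v_i\,\tfrac{x_i}{2}$ and shows the commutation relation $\partial_j(\cL v) = \cL v_j - \tfrac{a^{-1}}{2}\,v_j$, so that differentiating an eigenfunction lowers the eigenvalue by exactly $\tfrac{a^{-1}}{2}$; since $-\cL\geq 0$, the spectrum of $-\cL$ consists of nonnegative integer multiples of $\tfrac{a^{-1}}{2}$ with polynomial eigenfunctions, and the $x_i$ realize the first nonzero one. You instead first normalize out the factor $a$ by observing $\cL = a^{-1}\cL_0$ and that the factor $a^{n/2}$ in $dv_g$ cancels in the Rayleigh quotient, and then prove $\lambda_1(\cL_0)=\tfrac12$ by pairing the test functions $x_i$ (upper bound) with the drift Bochner identity of Lemma \ref{l:dB} applied with $\phi\equiv 0$ (lower bound). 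Your Bochner step is precisely the flat-Gaussian specialization of the general shrinker lower bound $\lambda_1\geq\tfrac12$ from \cite{CxZ}, which makes it more conceptual and more portable, but it requires knowing in advance that $\lambda_1$ is attained by an eigenfunction in $W^{1,2}$ (which you rightly note is classical for the Gaussian, via Hermite polynomials). The paper's ladder argument is more elementary in that it produces the full discrete spectrum and the polynomial eigenfunctions directly, without appealing to the Bochner formula or to prior knowledge that the infimum is attained. Both are complete proofs; the scaling reduction in your first step is clean and correct.
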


\begin{proof}
Given a function $v$, we have $(\nabla v)^i = g^{ij} v_j$ and, thus, 
\begin{align}
	\langle \nabla v , \nabla f \rangle = g_{ik} (g^{ij} v_j)(g^{km} f_m) = g^{ij} v_i f_j = a^{-1} \, v_i \frac{x_i}{2} \, .
\end{align}
From this and the fact that $\partial_j$ commutes with $\Delta$ (since the metric is constant in space), we see that
\begin{align}
	\partial_j \, (\cL \, v) = \cL \, v_j - \frac{a^{-1}}{2} \, v_j \, .
\end{align}
It follows from the standard argument that the $L^2$ eigenvalues of $\cL$ occur at multiples of $\frac{a^{-1}}{2}$ and are given by polynomials.
Since $\Delta v = a^{-1} \, v_{ii}$, we see that $\Delta \, x_i =0$ and, thus, 
\begin{align}
	\cL \, x_i = - \langle \nabla x_i , \nabla f \rangle = - a^{-1} \, \delta_{ij} \, f_j = - \frac{a^{-1}}{2} \, x_i \, .
\end{align}
The $x_i$'s will be the lowest eigenfunctions (after the constant), giving the lemma.
\end{proof}

\begin{Cor}	\label{c:sharp}
The first eigenvalue $\lambda_1 (t)$ of the solution \eqr{e:fgont}, \eqr{e:trivsol} satisfies
\begin{align}	\label{e:sharp}
	\lambda_1 (t) = \frac{\lambda_1 (t_0)}{2\, \lambda_1 (t_0)\left( 1 -  \e^{t-t_0}\right)   +  \e^{t-t_0}} \, .
\end{align}
If $u(t_0) > 1$, then we get an eternal modified Ricci flow where $\lambda_1 (t) < \frac{1}{2}$ for all $t$.
\end{Cor}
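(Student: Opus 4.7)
The plan is to reduce the corollary to a direct computation using Lemma \ref{l:lam1}. First, I observe that the drift Laplacian $\cL = \Delta - \nabla_{\nabla f}$ only sees $f$ through $\nabla f$, and the time-dependent term $-\frac{n}{2}\log u(t)$ in \eqr{e:fgont} is spatially constant. Hence, at each fixed $t$, the drift Laplacian of the pair $(g(t), f(\cdot, t))$ on $\RR^n$ coincides with the drift Laplacian of $(u(t)\,\delta_{ij}, \tfrac{|x|^2}{4})$. Applying Lemma \ref{l:lam1} with $a = u(t)$ gives at once
\begin{align}
\lambda_1(t) = \frac{1}{2\,u(t)} \, .
\end{align}

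Next I would translate the formula \eqr{e:trivsol} for $u(t)$ into a formula for $\lambda_1(t)$. Setting $t = t_0$ yields $u(t_0) = (2\lambda_1(t_0))^{-1}$, so that
\begin{align}
u(t_0) - 1 = \frac{1 - 2\lambda_1(t_0)}{2\lambda_1(t_0)} \, .
\end{align}
Substituting this into \eqr{e:trivsol}, a one-line computation gives
\begin{align}
u(t) = \frac{2\lambda_1(t_0)\,(1-\e^{t-t_0}) + \e^{t-t_0}}{2\lambda_1(t_0)} \, ,
\end{align}
and dividing into $\tfrac{1}{2}$ recovers exactly the right-hand side of \eqr{e:sharp}.

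For the final assertion, if $u(t_0) > 1$ then $u(t_0) - 1 > 0$, so the function $u(t) = 1 + (u(t_0)-1)\,\e^{t-t_0}$ is strictly increasing in $t$, stays bounded below by $1$, and is defined for all $t \in \RR$. Thus the flow is eternal and $\lambda_1(t) = (2u(t))^{-1} < \frac{1}{2}$ for every $t$.

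There is no real obstacle here: the only subtlety is confirming that the additive spatial-constant correction $-\tfrac{n}{2}\log u(t)$ built into $f$ in \eqr{e:fgont} does not alter the spectrum computed in Lemma \ref{l:lam1} (which it does not, since it contributes nothing to $\nabla f$). Everything else is a short algebraic manipulation of \eqr{e:trivsol}, and the corollary then demonstrates sharpness of the first case of Theorem \ref{l:compk} (when $\lambda_1(t_0) < \tfrac{1}{2}$ and $u(t_0) > 1$) and of the third case (when $\lambda_1(t_0) > \tfrac{1}{2}$ and $u(t_0) < 1$), since equality holds throughout in \eqr{e:holdsk}.
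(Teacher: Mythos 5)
Your proof is correct and follows essentially the same route as the paper's own: apply Lemma \ref{l:lam1} with $a=u(t)$ to get $\lambda_1(t) = \frac{1}{2u(t)}$, then substitute \eqr{e:trivsol} and simplify. The explicit remark that the spatially constant term $-\frac{n}{2}\log u(t)$ contributes nothing to $\nabla f$ (and hence to $\cL$) is the right justification for invoking Lemma \ref{l:lam1}, and the rest is the same short algebra.
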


\begin{proof}
It follows from  Lemma \ref{l:lam1} and \eqr{e:fgont} that 
\begin{align}
	\lambda_1 (t) = \frac{1}{2\, u(t)} \, . 
\end{align}
  In particular, $u(t_0) =  \frac{1}{2\, \lambda_1(t_0)}$ and, thus,  \eqr{e:trivsol} gives that
\begin{align}
	\lambda_1 (t) = \frac{1}{2\, u(t)} =  \frac{1}{2\,  \left\{ 1 + \left(u(t_0) - 1 \right) \, \e^{t-t_0} \right\}}
	=    \frac{1}{2\,  \left\{ 1 + \left( \frac{1}{2\, \lambda_1(t_0)} - 1 \right) \, \e^{t-t_0} \right\}} \, .
\end{align}
Simplifying this gives \eqr{e:sharp}.
\end{proof}

\section{Splitting theorem}

We will prove the splitting theorem, Theorem \ref{P:highsplit}, under the assumption that the   integrations by parts are justified and the   integrals converge.  
 For instance, when $M$ has finite weighted volume and a log Sobolev inequality, then Proposition $1$ in \cite{CxZ} (cf. \cite{HN}) guarantees that $\cL$ has discrete eigenvalues going to infinity with finite multiplicity and with eigenfunctions in $W^{1,2}$.

\vskip1mm
We start with  a more precise statement of the splitting:
 
\begin{Thm}	\label{P:highsplitA}
If   $\lambda_k (t_0) = \frac{1}{2}$ for some $k \geq 1$ and $\lambda_1 (t_1) \geq \frac{1}{2}$ with $t_0 < t_1$, then
 $M = N^{n-k} \times \RR^k$ and there are metrics $g_N(t)$ on $N$ and functions $f_N: N \times \RR \to \RR$  so that:
\begin{align}
 g (t) &= g_N (t) + \sum_{i=1}^k dx_i^2 \, , \\
 f &= f_N + \frac{1}{4} \, \sum_{i=1}^k x_i^2 \, , \\
	\partial_t \, g  & = \partial_t \, g_N = g_N - 2 \, \Hess_{\bar{f}} - 2 \, \Ric_N \, , \label{e:check1} \\
	f_t &= \bar{f}_t = \frac{n-k}{2} - S_N - \Delta_N \, \bar{f} \, .  \label{e:check2}
\end{align}
\end{Thm}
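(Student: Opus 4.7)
The plan is to pin the eigenvalues $\lambda_1, \dots, \lambda_k$ to $\frac{1}{2}$ on $[t_0, t_1]$, extract $\Hess = 0$ for every $\frac{1}{2}$-eigenfunction from the equality case of the argument in the proof of Theorem \ref{l:compk}, split at $t = t_0$ via de Rham, and propagate the splitting by uniqueness of the modified Ricci flow.

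\textit{Pinning the eigenvalues.} Combining $\lambda_k(t_0) = \frac{1}{2}$ with the second case of Theorem \ref{l:compk} gives $\lambda_k(t) \leq \frac{1}{2}$ on $[t_0, t_1]$, and hence $\lambda_i(t) \leq \frac{1}{2}$ for $i \leq k$. Together with $\lambda_1(t_1) \geq \frac{1}{2}$ this forces $\lambda_i(t_1) = \frac{1}{2}$ for $i \leq k$. If $\lambda_1(s) < \frac{1}{2}$ for some $s \in (t_0, t_1)$, applying the first case of Theorem \ref{l:compk} starting at $s$ would give $\lambda_1(t_1) < \lambda_1(s) < \frac{1}{2}$, a contradiction. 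Thus $\lambda_1(t) = \cdots = \lambda_k(t) = \frac{1}{2}$ throughout $[t_0, t_1]$.

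\textit{Vanishing Hessian.} Fix $\tau \in [t_0, t_1]$ and an orthonormal basis $\bar{u}_1, \dots, \bar{u}_k$ of the $\frac{1}{2}$-eigenspace of $\cL(\tau)$. The Gram-Schmidt construction from the proof of Theorem \ref{l:compk} produces orthonormal test functions $v_i(\cdot, t)$ with zero $\e^{-f}$-mean. Summing the identity
\begin{align}
\partial_t|_{t=\tau} \int |\nabla v_i|^2 \, \e^{-f} = (2\lambda_i(\tau) - 1) \lambda_i(\tau) - 2 \int |\Hess_{\bar{u}_i}|^2 \, \e^{-f}
\end{align}
over $i \leq k$ and using $\sum_i \int |\nabla v_i|^2 \, \e^{-f} \geq \sum_{i=1}^k \lambda_i$ (with equality at $t = \tau$) yields
\begin{align}
\sum_{i=1}^k \lambda_i'(\tau) \leq -2 \sum_{i=1}^k \int |\Hess_{\bar{u}_i}|^2 \, \e^{-f}
\end{align}
in the limsup sense. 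Since $\sum_i \lambda_i \equiv \frac{k}{2}$ the LHS vanishes, forcing each integral to vanish and hence $\Hess_{\bar{u}_i} = 0$ for every $\frac{1}{2}$-eigenfunction of $\cL(\tau)$.

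\textit{Splitting at $t_0$.} The gradients $\nabla \bar{u}_i$ are parallel, and linearly independent since the $\bar{u}_i$ are $L^2$-orthonormal with zero mean and so independent modulo constants. By the de Rham decomposition theorem, $M = N^{n-k} \times \RR^k$ isometrically; after an orthogonal change of basis the $\bar{u}_i$ become the Euclidean coordinates $x_i$ on the flat factor, $g(t_0) = g_N(t_0) + \sum_i dx_i^2$, and $\Delta \bar{u}_i = 0$ combined with $\cL \bar{u}_i = -\frac{1}{2} \bar{u}_i$ gives $\partial_{x_i} f(\cdot, t_0) = \frac{x_i}{2}$, so $f(\cdot, t_0) = f_N(\cdot, t_0) + \frac{1}{4} \sum_i x_i^2$.

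\textit{Propagation.} Let $g_N(t), f_N(\cdot, t)$ evolve from these initial data by \eqr{e:check1}, \eqr{e:check2} on $N$, and set $\hat g = g_N + \sum_i dx_i^2$, $\hat f = f_N + \frac{1}{4}\sum_i x_i^2$. A block computation shows the $\RR^k$-$\RR^k$ component of $\hat g - 2\Hess_{\hat f} - 2\Ric_{\hat g}$ is $I - I - 0 = 0$, the cross blocks vanish by the product structure, and the $N$-$N$ block is $g_N - 2\Hess_{f_N} - 2\Ric_N = \partial_t g_N$; the analogous check for \eqr{e:gf2} uses $\Delta \hat f = \Delta_N f_N + \frac{k}{2}$ and $S_{\hat g} = S_N$. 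So $(\hat g, \hat f)$ solves \eqr{e:gf1}, \eqr{e:gf2} with the same Cauchy data at $t_0$ as $(g, f)$. The main obstacle is the final uniqueness step used to conclude $(g, f) = (\hat g, \hat f)$ on $[t_0, t_1]$: for compact $M$ this follows from standard Ricci flow uniqueness after undoing the parabolic rescaling and diffeomorphism, but in the non-compact case one must verify that the integrability hypotheses of the theorem supply enough control on the growth of $f$ and $g$ to rule out competing non-split solutions with the same data.
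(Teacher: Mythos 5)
Your first two steps (pinning $\lambda_1 = \cdots = \lambda_k \equiv \tfrac12$ on $[t_0,t_1]$, then extracting $\Hess_{\bar u_i}=0$ at each fixed time $\tau$) are correct, and the trace/Ky--Fan summation you use to force the Hessians to vanish is a clean variant of the paper's pointwise Rayleigh-quotient argument. The splitting of $(M,g(t_0),f(t_0))$ via de Rham is also fine.

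The genuine gap is the one you flag yourself: the propagation step. Appealing to uniqueness of the coupled system \eqr{e:gf1}, \eqr{e:gf2} from Cauchy data at $t_0$ is not something the theorem's hypotheses supply. The $f$-equation is a constraint (it is equivalent to preservation of $\e^{-f}dv_g$, i.e.\ $f$ is determined algebraically by $g$ and $f(t_0)$), so after eliminating $f$ one is left with a degenerate second-order system for $g$ whose forward well-posedness is already delicate on a closed manifold and is genuinely open in the non-compact generality the theorem is stated in (the paper only assumes that integrations by parts are justified). Your per-time-slice argument gives a splitting $M = N_\tau \times \RR^k$ for each $\tau$, but nothing forces the flat $\RR^k$-directions to be the same for different $\tau$, and without that one cannot read off \eqr{e:check1} and \eqr{e:check2}.

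The paper avoids this entirely by making the splitting directions manifestly time-independent. One evolves the eigenfunctions of $\cL(t_0)$ by the drift heat equation $\partial_t u_i = \cL\,u_i + \tfrac12 u_i$ and tracks $F_i(t) = E_{u_i}(t)/I_{u_i}(t)$. The differential inequality \eqr{e:IJ4} plus Lemma \ref{l:ftc} gives $F_i(t)\le\tfrac12$, while the pinned eigenvalue $\lambda_1(t)=\tfrac12$ gives $F_i(t)\ge\tfrac12$; equality $F_i\equiv\tfrac12$ then kills the $-2\int|\Hess_{u_i}|^2\,\e^{-f}/I$ term in \eqr{e:IJ4} for \emph{all} $t\in[t_0,t_1]$, and simultaneously makes $u_i$ a $\tfrac12$-eigenfunction at every time, so $\partial_t u_i = \cL\,u_i + \tfrac12 u_i = 0$. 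The $u_i$ are therefore literally static, the $\nabla u_i$ give one fixed parallel orthonormal $\RR^k$-frame on all of $[t_0,t_1]$, and \eqr{e:check1}, \eqr{e:check2} follow from a direct block decomposition, with no uniqueness theorem for the flow invoked at any point. If you want to repair your argument along its original lines, you would either need to prove such a uniqueness statement, or replace the per-slice de Rham splitting by an evolution argument for the eigenfunctions of this kind.
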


\begin{proof} 
Theorem \ref{l:compk} gives that $\lambda_k(t) \leq \frac{1}{2}$ for every $t \geq t_0$; since $\lambda_1 (t_1) = \frac{1}{2}$, we see that
\begin{align}	\label{e:lamit}
	\lambda_i (t) = \frac{1}{2} {\text{ for every }} t \in [t_0 , t_1] {\text{ and }} 1 \leq i \leq k \, .
\end{align}
 For $i= 1 , \dots , k$, let $u_i$ satisfy 
 \begin{align}
 	\partial_t u_i = \cL \, u_i + \frac{1}{2} \, u_i {\text{  for }} t \geq t_0
\end{align}
 with $u_i (\cdot , t_0)=\bar{u}_i (\cdot)$ equal to the $i$-th  eigenfunction   at time $t_0$ normalized so that $\int \bar{u}_i \, \bar{u}_j^2 \, \e^{-f} = \delta_{ij}$.  Note that $\int \bar{u}_i \, \e^{-f} = 0$ and   $\int \langle \nabla \bar{u}_i , \nabla \bar{u}_j \rangle \, \e^{-f} = \frac{1}{2} \, \delta_{ij}$.
Let $I_i, J_{ij} , E_i , D_{ij}, F_i$ be as in \eqr{e:hereJ}, \eqr{e:hereD}.  
  Lemma \ref{l:evolves} gives that $\int u_i \, \e^{-f} = 0$, $E_i'(t)  = -2 \, \int  |\Hess_{u_i}|^2 \, \e^{-f}$, 
    \begin{align}
  	J_{ij}'(t) &=   J_{ij} (t)  - 2\, D_{ij}(t)  \, ,  \label{e:Jija2}  \\
	F_i'(t) & = \frac{ -2 \, \int  |\Hess_{u_i}|^2 \, \e^{-f} }{I(t)} + F_i'(t) \, (2\, F_i (t) - 1) \leq F_i'(t) \, (2\, F_i (t) - 1)  \, . \label{e:numba2}
  \end{align}
  Since $F_i (t_0) = \frac{1}{2}$ and $F_i$ satisfies \eqr{e:numba2},
   the second claim in Lemma \ref{l:ftc} gives that $F_i (t) = \frac{1}{2}$ for every $t \geq t_0$.
   On the other hand,   $\int u_i \, \e^{-f} = 0$ for each $t$, so $u_i$ is a valid test function for $\lambda_1 (t) = \frac{1}{2}$ and, thus, we have that $F_i (t) \geq \frac{1}{2}$.  It follows that $F_i (t) \equiv \frac{1}{2}$ for every $t \geq t_0$ and, thus, each $u_i$ is an eigenfunction with eigenvalue $\frac{1}{2}$
   \begin{align}
   	\cL \, u_i = - \frac{1}{2} \, u_i {\text{ and }} \partial_t \, u_i = 0 {\text{ for every }} t \geq t_0 \, .
   \end{align}
   Using this and integrating by parts gives that 
   \begin{align}
   	J_{ij} (t) = -2 \, \int (\cL \, u_i) \, u_j \, \e^{-f} = 2 \, D_{ij}(t) \, .
\end{align}
  Using this in \eqr{e:Jija2}, it follows that 
   $J_{ij}' (t_0) = 0$.  Since $J_{ij} (t_0) = 0$ by construction,  it follows 
   \begin{align}	\label{e:DJij}
   	0 = J_{ij} (t) = 2\, D_{ij} (t) = 2 \, \int \langle \nabla u_i , \nabla u_j \rangle \, \e^{-f}  {\text{ for every }} t \geq t_0 \, .
\end{align}
Using again that $F_i (t) \equiv \frac{1}{2}$ in \eqr{e:numba2}, we also see that $\Hess_{u_i} \equiv 0$ and, thus, the vector fields $\nabla u_i$ are parallel for each $i$.  Since they are parallel, the functions
$\langle \nabla u_i , \nabla u_j \rangle$ are constant for each $t$; by \eqr{e:DJij}, we see that $\langle \nabla u_i , \nabla u_j \rangle \equiv 0$.   Since $I_i (t) = 2 \, E_i (t)$ is constant in time, 
we can arrange that 
$|\nabla u_i| \equiv 1$ after multiplying the $u_i$'s by a constant.

These $k$ parallel and orthonormal vector fields give the desired splitting
\begin{align}
	M = N \times \RR^k \, , 
\end{align}
where $N = \{ u_1 = \dots = u_k = 0 \}$ is the zero set of the $u_i$'s.  Since the $\nabla u_i$'s are parallel, they are also Killing fields and translation in $u_i$ is an isometry.  If follows that 
there is a family of metrics $g_N (t)$ on $N$ so that
\begin{align}
	g (t) = g_N (t) + \sum_i du_i^2 \, .
\end{align}
 Since $\Hess_{u_i} = 0$, so does $\Delta \, u_i$ and, thus, $\cL \, u_i = - \frac{1}{2} \, u_i$ implies that $\langle \nabla u_i , \nabla f \rangle  = \frac{1}{2} \, u_i$.  
 From this, the orthogonality of the $\nabla u_i$'s, and $|\nabla u_i| = 1$,  we see that for each $i$
 \begin{align}
 	\langle \nabla u_i  , \nabla \, \left( f - \frac{1}{4} \sum_j u_j^2 \right) \rangle = \langle \nabla u_i  , \nabla   f - \frac{u_i}{2}  \nabla  u_i  \rangle = 0 \, .
 \end{align}
 It follows that $\bar{f} = f - \frac{1}{4} \sum_j u_j^2$ does not depend on $\RR^k$ and, thus, depends only on $N$.    

It remains to verify \eqr{e:check1} and \eqr{e:check2}.  Using that $\partial_t u_i = 0$, $g_t = g - 2\, \Ric - 2 \, \Hess_f$ and   $\Hess_{\bar{f}} = \Hess_f - \frac{1}{2} \sum du_i^2$ gives \eqr{e:check1}.  Similarly, since $\partial_t u_i = 0$, 
$S = S_N$, and 
\begin{align}
	\Delta f = \Delta_N \, \bar{f} + \frac{1}{4} \, \sum_i \Delta \, u_i^2 =   \Delta_N \, \bar{f} + \frac{1}{2} \, \sum_i  |\nabla u_i|^2  =  \Delta_N \, \bar{f} + \frac{k}{2} \, ,
\end{align}
we have
\begin{align}
	\bar{f}_t &= f_t = \frac{n}{2} - S - \Delta \, f = \frac{n}{2} - S_N - \left( \Delta_N \, \bar{f} + \frac{k}{2} \right) \notag \\
	&= \frac{n-k}{2} - S_N - \Delta_N \, \bar{f} \, .
\end{align}
\end{proof}

\end{document}